\newtheorem{thm}{Theorem}[section]
\newtheorem{lem}[thm]{Lemma}
\newtheorem{prop}[thm]{Proposition}
\theoremstyle{definition}
\newtheorem{defn}[thm]{Definition}
\theoremstyle{remark}
\newtheorem{rem}[thm]{Remark}
\numberwithin{equation}{section}
\def\bb{{\mathbb B}}
\def\bc{{\mathbb C}}
\def\bm{{\mathbb M}}
\def\bn{{\mathbb N}}
\def\br{{\mathbb R}}
\def\a{\alpha}
\def\b{\beta}
\def\g{\gamma}  
  \def\D{\Delta}
\def\ve{\varepsilon}
\def\l{\lambda}
\def\p{\psi}
\def\s{\sigma} 
\def\t{\tau}
\def\f{\varphi}
\def\w{\omega} 
\def\tr{\mathop{\rm Tr}}
\def\id{{\bf 1}\!\!{\rm I}}
\def\fb{{\mathbf{f}}}
\def\pb{{\mathbf{p}}}
\def\xb{{\mathbf{x}}}
\def\bx{{\mathbf{b}}}
\def\qb{{\mathbf{q}}}
\def\D{\Delta}
\def\wb{{\mathbf{w}}}
\def\o{\otimes}
\def\a{\alpha}
\begin{document}
\title[On dynamics of quadratic operators]
{On Kadison-Schwarz type quantum quadratic operators\\ on
$\bm_2(\mathbb{C})$}

\author{Farrukh Mukhamedov}
\address{Farrukh Mukhamedov\\
 Department of Computational \& Theoretical Sciences\\
Faculty of Science, International Islamic University Malaysia\\
P.O. Box, 141, 25710, Kuantan\\
Pahang, Malaysia} \email{{\tt far75m@yandex.ru}, {\tt
farrukh\_m@iium.edu.my}}

\author{Abduaziz Abduganiev}
\address{Abduaziz Abduganiev\\
 Department of Computational \& Theoretical Sciences\\
Faculty of Science, International Islamic University Malaysia\\
P.O. Box, 141, 25710, Kuantan\\
Pahang, Malaysia} \email{{\tt azizi85@yandex.ru}}

\begin{abstract}

In the present paper we study description of Kadison-Schwarz type
quantum quadratic operators acting from $\bm_2(\mathbb{C})$ into
$\bm_2(\mathbb{C})\o\bm_2(\mathbb{C})$. Note that such kind of
operator is a generalization of quantum convolution. By means of
such a description we provide an example of q.q.o. which is not a
Kadision-Schwartz operator. Moreover, we study dynamics of an
associated nonlinear (i.e. quadratic) operators acting on the state
space of $\bm_2(\mathbb{C})$.

 \vskip 0.3cm \noindent {\it Mathematics Subject
Classification}: 46L35, 46L55, 46A37.
60J99.\\
{\it Key words}: quantum quadratic operators; Kadison-Schwarz
operator.
\end{abstract}

\maketitle

\section{Introduction}

It is known that one of the main problems of quantum information
is characterization of positive and completely positive maps on
$C^*$-algebras. There are many papers devoted to this problem (see
for example \cite{Choi,MM1,RSW,St}). In the literature the
completely positive maps have proved to be of great importance in
the structure theory of C$^*$-algebras. However, general positive
(order-preserving) linear maps are very intractable\cite{MM1,Ma}.
It is therefore of interest to study conditions stronger than
positivity, but weaker than complete positivity. Such a condition
is called {\it Kadison-Schwarz property}, i.e a map $\phi$
satisfies the Kadison-Schwarz property if $\phi(a)^*\phi(a)\leq
\phi(a^*a)$ holds for every $a$. Note that every unital completely
positive map satisfies this inequality, and a famous result of
Kadison states that any positive unital map satisfies the
inequality for self-adjoint elements $a$. In \cite{Rob} relations
between $n$-positivity of a map $\phi$ and the Kadison-Schwarz
property of certain map is established. Certain relations between
complete positivity, positivite and the Kadison-Schwarz property
have been considered in \cite{Bh},\cite{BS},\cite{BD}.  Some
spectral and ergodic properties of Kadison-Schwarz maps were
investigated in \cite{Gr1,Gr2,Rob1}.

In \cite{MAA} we have studied quantum quadratic operators (q.q.o.),
i.e. maps from $\bm_2(\mathbb{C})$ into
$\bm_2(\mathbb{C})\o\bm_2(\mathbb{C})$, with the Kadison-Schwarz
property. It is found some necessary conditions for the
trace-preserving quadratic operators to be the Kadison-Schwarz ones.
Since trace-preserving maps arise naturally in quantum information
theory (see e.g. \cite{N}) and other situations in which one wishes
to restrict attention to a quantum system that should properly be
considered a subsystem of a larger system with which it interacts.
Note that in \cite{GM1,GM2} quantum quadratic operators acting on a
von Neumann algebra were defined and studied. Certain ergodic
properties of such operators were studied in \cite{M2,M3} (see for
review \cite{GMR}). In the present paper we continue our
investigation, i.e. we are going to study further properties of
q.q.o. with Kadison-Schwarz property. We will provide an example of
q.q.o. which is not a Kadision-Schwarz operator, and study its
dynamics. We should stress that q.q.o. is a generalization of
quantum convolution (see \cite{W}). Some dynamical properties of
quantum convolutions were investigated in \cite{FS}.

Note that a description of bistochastic Kadison-Schwarz mappings
from $\bm_2(\mathbb{C})$ into $\bm_2(\mathbb{C})$ has been provided
in \cite{MA}.

\section{Preliminaries}

In what follows, by $\bm_2(\bc)$ we denote an algebra of $2\times 2$
matrices over complex filed $\bc$. By $\bm_2(\bc)\o \bm_2(\bc)$ we
mean tensor product of $\bm_2(\bc)$ into itself. We note that such a
product can be considered as an algebra of $4\times 4$ matrices
$\bm_4(\bc)$ over $\bc$. In the sequel $\id$ means an identity
matrix, i.e. $ \id = \left( \begin{array}{cc} 1 & 0 \\ 0 & 1
\end{array} \right) $. By $S(\bm_2(\bc))$ we denote the set of all
states (i.e. linear positive functionals which take value 1 at
$\id$) defined on $\bm_2(\bc)$.

\begin{defn}\label{qqso} A linear operator $\D:\bm_2(\bc)\to \bm_2(\bc)\o\bm_2(\bc)$ is said to be
\begin{enumerate}
\item[(a)] -- a {\it quantum quadratic operator (q.q.o.)} if it
satisfies the following conditions:
\begin{enumerate}
    \item[(i)] unital, i.e. $\D\id=\id\o\id$;

  \item[(ii)] $\D$ is positive, i.e.
$\D x\geq 0$ whenever $x\geq 0$;
\end{enumerate}
\item[(b)] -- a {\it Kadison-Schwarz operator (KS)} if it satisfies
\begin{equation}\label{KS}
\D(x^*x)\geq\D(x^*)\D(x) \ \ \textrm{for all} \ x\in\bm_2(\bc) .
\end{equation}
\end{enumerate}
\end{defn}

One can see that if $\D$ is unital and KS operator, then it is a
q.q.o. A state  $h\in S(\bm_2(\bc))$ is called {\it a Haar state}
for a q.q.o. $\D$ if for every $x\in\bm_2(\bc)$ one has
\begin{equation}\label{Haar}
(h\o id)\circ \D(x)=(id\o h)\circ\D(x)=h(x)\id.
\end{equation}

\begin{rem}\label{qg} Note that if a quantum convolution $\D$ on
$\bm_2(\bc)$ becomes a $*$-homomorphic map with a condition
$$
\overline{\textrm{Lin}}((\id\o
\bm_2(\bc))\D(\bm_2(\bc)))=\overline{\textrm{Lin}}((\bm_2(\bc)\o\id)\D(\bm_2(\bc)))=\bm_2(\bc)\o\bm_2(\bc)
$$
then a pair $(\bm_2(\bc),\D)$ is called a {\it compact quantum
group} \cite{W}. It is known \cite{W} that for given any compact
quantum group there exists a unique Haar state w.r.t. $\D$.
\end{rem}

\begin{rem} Let $U:\bm_2(\bc)\o\bm_2(\bc)\to \bm_2(\bc)\o\bm_2(\bc)$
be a linear operator such that $U(x\o y)=y\o x$ for all $x,y\in
\bm_2(\bc)$. If a q.q.o. $\D$ satisfies $U\D=\D$, then $\D$ is
called a {\it quantum quadratic stochastic operator}. Such a kind of
operators were studied and investigated in \cite{M2}.
\end{rem}

Each q.q.o. $\D$ defines a conjugate operator
$\D^*:(\bm_2(\bc)\o\bm_2(\bc))^*\rightarrow \bm_2(\bc)^*$  by
\begin{equation}\label{cqo}
\D^*(f)(x)=f(\D x), \ \ f\in (\bm_2(\bc)\o \bm_2(\bc))^*, \ x\in
\bm_2(\bc).
\end{equation}

One can define an operator $V_\D$  by
\begin{equation}\label{qo} V_\D(\f)=\D^*(\f\o\f), \ \
\f\in S(\bm_2(\bc)),
\end{equation}
which is called a {\it quadratic operator (q.c.)}. Thanks to the
conditions (a) (i),(ii) of Def. \ref{qqso} the operator $V_\D$
maps $S(\bm_2(\bc))$ to $S(\bm_2(\bc))$.

\section{Quantum quadratic operators with Kadison-Schwarz property
on $\bm_2(\bc)$}

In this section we are going to describe quantum quadratic operators
on $\bm_2(\bc)$ as well as find necessary conditions for such
operators to satisfy the Kadison-Schwarz property.

Recall \cite{BR} that the identity and Pauli matrices $\{ \id,
\sigma_1, \sigma_2, \sigma_3 \}$ form a basis for $\bm_2(\bc)$,
where
\begin{eqnarray*}
\sigma_1 = \left( \begin{array}{cc} 0 & 1 \\ 1 & 0 \end{array}
\right)~~ \sigma_2 = \left( \begin{array}{cc} 0 & -i \\ i & 0
\end{array} \right)~~ \sigma_3 = \left( \begin{array}{cc} 1 & 0 \\
0 & -1 \end{array} \right).
\end{eqnarray*}

In this basis every matrix $x\in\bm_2(\bc)$ can  be written as $x =
w_0\id + \wb{\bf \sigma}$ with $w_0\in\bc$, $\wb =(w_1,w_2,w_3)\in
\bc^3$, here $\wb\s=w_1\s_1+w_2\s_2+w_3\s_3$.

\begin{lem}\label{m2}\cite{RSW} The following assertions hold true:
\begin{enumerate}
\item[(a)] $x$ is self-adjoint iff  $w_0,\wb$  are reals; \item[(b)]
$\tr(x) = 1$ iff $w_0 =0.5$, here $\tr$ is the trace of a matrix
$x$;
\item[(c)] $x
> 0$ iff $\|\wb\|\leq w_0$, where
$\|\wb\|=\sqrt{|w_1|^2+|w_2|^2+|w_3|^2}$.
\end{enumerate}
\end{lem}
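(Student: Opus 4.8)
The plan is to treat the three parts separately, exploiting that $\{\id,\sigma_1,\sigma_2,\sigma_3\}$ is a linearly independent family of Hermitian matrices, so that the coordinates $w_0,\wb$ of $x=w_0\id+\wb\sigma$ are uniquely determined and behave transparently under adjoint, trace, and the positivity test.

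For part (a), I would write out the adjoint explicitly. Since each of $\id,\sigma_1,\sigma_2,\sigma_3$ is self-adjoint, taking adjoints in $x=w_0\id+\wb\sigma$ yields $x^*=\overline{w_0}\,\id+\overline{\wb}\,\sigma$, where the conjugation acts coordinatewise on $\wb$. By linear independence of the basis, the equality $x=x^*$ forces $w_0=\overline{w_0}$ and $w_j=\overline{w_j}$ for $j=1,2,3$, i.e. all coefficients are real; the converse is immediate. For part (b), I would use linearity of the trace together with $\tr(\id)=2$ and $\tr(\sigma_j)=0$ for $j=1,2,3$, read off directly from the displayed matrices. Hence $\tr(x)=2w_0$, and $\tr(x)=1$ is equivalent to $w_0=1/2$.

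Part (c) carries the real content. First, the inequality $x\ge 0$ already presupposes $x=x^*$, so by part (a) we may assume $w_0$ and $\wb$ are real. I would then compute the spectrum of $x$. Using the Pauli relation $\sigma_j\sigma_k+\sigma_k\sigma_j=2\delta_{jk}\id$, one checks that $(\wb\sigma)^2=\|\wb\|^2\,\id$, so the traceless part $\wb\sigma$ has eigenvalues $\pm\|\wb\|$ and therefore $x=w_0\id+\wb\sigma$ has eigenvalues $w_0\pm\|\wb\|$. Positivity is precisely the requirement that both eigenvalues be nonnegative, i.e. $w_0-\|\wb\|\ge 0$, which is exactly $\|\wb\|\le w_0$. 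An equally short alternative is to invoke the Hermitian $2\times 2$ criterion ``$\tr\ge 0$ and $\det\ge 0$'': here $\tr(x)=2w_0$ while a direct computation gives $\det(x)=w_0^2-\|\wb\|^2$, and combining $w_0\ge 0$ with $w_0^2\ge\|\wb\|^2$ yields $w_0\ge\|\wb\|$.

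The only point requiring care is the reduction at the start of (c): one must observe that the positivity hypothesis is meaningful only for self-adjoint $x$, so that part (a) applies and the coefficients may legitimately be taken real before the eigenvalue or determinant computation is carried out. Once this is in place, the remainder is a direct manipulation with the Pauli identities, and no genuine obstacle arises.
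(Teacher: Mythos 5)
Your proposal is correct, and it is the standard argument: the paper does not prove this lemma at all but simply quotes it from the reference [RSW], so there is no internal proof to diverge from. Your computation --- $x^*=\overline{w_0}\,\id+\overline{\wb}\,\sigma$ plus linear independence for (a), $\tr(\id)=2$, $\tr(\sigma_j)=0$ for (b), and $(\wb\sigma)^2=\|\wb\|^2\id$ giving eigenvalues $w_0\pm\|\wb\|$ for (c) --- is exactly the canonical route, and you correctly flag the one genuine subtlety, namely that the hypothesis of (c) presupposes self-adjointness so that (a) legitimizes taking $w_0,\wb$ real. The only cosmetic remark: the lemma's ``$x>0$'' paired with the non-strict inequality $\|\wb\|\leq w_0$ is really positive semidefiniteness $x\geq 0$ (as the paper uses it), which is precisely what your nonnegative-eigenvalue criterion establishes.
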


Note that any state $\f\in S(\bm_2(\bc))$ can be represented by
\begin{equation}\label{state}
{\f}(w_0\id + \wb\sigma)=w_0+\langle\wb,{\mathbf{f}}\rangle, \ \
\end{equation}
where ${\mathbf{f}}=(f_1,f_2,f_3)\in\br^3$ with
$\|{\mathbf{f}}\|\leq 1$. Here as before $\langle\cdot,\cdot\rangle$
stands for the scalar product in $\bc^3$. Therefore, in the sequel
we will identify a state $\varphi$ with a vector $\fb\in \br^3$.

In what follows by $\t$ we denote a normalized trace, i.e.
$\t(x)=\frac{1}{2}\tr(x)$, $x\in \bm_2(\bc)$,

Let  $\Delta:\bm_2(\bc)\rightarrow \bm_2(\bc) \otimes \bm_2(\bc)$ be
a q.q.o. with a Haar state $\t$. Then one has
$$
\tau\otimes \tau(\Delta x)=\tau(\tau\otimes
id)(\Delta(x))=\tau(x)\t(\id)=\t(x), \ \ \ x\in \bm_2(\bc),
$$
which means that $\t$ is an invariant state for $\D$.

 Let
us write the operator $\Delta $ in terms of a basis in
$\bm_2(\bc)\o\bm_2(\bc)$ formed by the Pauli matrices. Namely,
\begin{eqnarray*}
&& \Delta \id=\id\otimes \id; \\\label{ddd1} && \Delta
(\sigma_i)=b_i(\id\otimes \id)+\overset{3}{\underset{j=1}{\sum
    }}b_{ji}^{(1)}(\id\otimes \sigma_j)+\overset{3}{\underset{j=1}{\sum
    }}b_{ji}^{(2)}(\sigma_j \otimes \id)+\overset{3}{\underset{m,l=1}{\sum
    }}b_{ml,i}(\sigma_m \otimes \sigma_l), \ \  i=1,2,3,
\end{eqnarray*}
where $b_{i},b_{ij}^{(1)},b_{ij}^{(2)},b_{ijk}\in \bc$,
($i,j,k\in\{1,2,3\}$).

 One can prove the following

\begin{thm}\cite[Proposition 3.2]{MAA}\label{qc1}
Let $\D:\bm_2(\bc)\to \bm_2(\bc)\o\bm_2(\bc) $ be a q.q.o. with a
Haar state $\t$, then it has the following form:
\begin{equation}\label{D3}
\D(x)=w_0\id\otimes\id+\sum_{m,l=1}^3\langle\bx_{ml},\overline{\wb}\rangle\sigma_m\otimes\sigma_l,
\end{equation}
where $x=w_0+\wb\s$,
$\bx_{ml}=(b_{ml,1},b_{ml,2},b_{ml,3})\in\br^3$,
$m,n,k\in\{1,2,3\}$.
\end{thm}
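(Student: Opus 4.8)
The plan is to start from the basis expansion of $\D(\sigma_i)$ displayed above and to let the Haar-state condition \eqref{Haar} with $h=\t$ annihilate every coefficient except the ``doubly-Pauli'' ones $b_{ml,i}$. The essential input is that $\t$ is orthogonal to the traceless part of the basis: since the Pauli matrices are traceless, $\t(\id)=1$ while $\t(\sigma_i)=0$ for $i=1,2,3$. In particular the right-hand side $\t(\sigma_i)\id$ of \eqref{Haar} vanishes, so it suffices to read off the two slice maps $(\t\o id)$ and $(id\o\t)$.

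First I would apply $(\t\o id)$ to $\D(\sigma_i)$ term by term. Using $\t(\id)=1$ and $\t(\sigma_m)=0$, the blocks $\sigma_j\o\id$ and $\sigma_m\o\sigma_l$ are killed, leaving $(\t\o id)\circ\D(\sigma_i)=b_i\id+\sum_{j}b_{ji}^{(1)}\sigma_j$. Setting this equal to $0$ and invoking the linear independence of $\{\id,\sigma_1,\sigma_2,\sigma_3\}$ forces $b_i=0$ and $b_{ji}^{(1)}=0$ for all $i,j$. Symmetrically, $(id\o\t)\circ\D(\sigma_i)=b_i\id+\sum_{j}b_{ji}^{(2)}\sigma_j=0$ yields $b_{ji}^{(2)}=0$. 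Hence $\D(\sigma_i)=\sum_{m,l=1}^3 b_{ml,i}(\sigma_m\o\sigma_l)$, and by linearity together with $\D\id=\id\o\id$ one obtains, for $x=w_0\id+\wb\s$,
\begin{equation*}
\D(x)=w_0\,\id\o\id+\sum_{m,l=1}^3\Big(\sum_{i=1}^3 b_{ml,i}w_i\Big)\sigma_m\o\sigma_l,
\end{equation*}
which is exactly \eqref{D3} once $\bx_{ml}=(b_{ml,1},b_{ml,2},b_{ml,3})$ is identified and the inner sum is rewritten as $\langle\bx_{ml},\overline{\wb}\rangle$ (the rewriting uses that the $b_{ml,i}$ are real, verified next).

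It remains to check that $\bx_{ml}\in\br^3$, i.e.\ that the coefficients $b_{ml,i}$ are real, and this is where positivity enters. Every positive linear map is Hermitian, so $\D(a^*)=\D(a)^*$; in particular $\D(\sigma_i)$ is self-adjoint because each $\sigma_i$ is. Since the sixteen matrices $\id\o\id$, $\id\o\sigma_j$, $\sigma_j\o\id$, $\sigma_m\o\sigma_l$ are all self-adjoint and form a basis of $\bm_2(\bc)\o\bm_2(\bc)$, they constitute a real basis of the real vector space of self-adjoint $4\times4$ matrices; therefore the expansion coefficients of the self-adjoint element $\D(\sigma_i)$ are real, giving $b_{ml,i}\in\br$.

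Overall the computation is essentially bookkeeping once the orthogonality $\t(\sigma_i)=0$ is exploited; the only genuinely structural point is the last one, where positivity (Definition \ref{qqso}(a)(ii)), rather than mere linearity, is what guarantees real coefficients. I would expect the realness step to be the part most worth spelling out, since it is the single place where the positivity hypothesis actually intervenes.
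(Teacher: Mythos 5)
Your proof is correct and is essentially the argument behind the cited result \cite[Proposition 3.2]{MAA}, which the paper itself does not reproduce: apply the slice maps $(\t\o id)$ and $(id\o\t)$ to the generic Pauli-basis expansion of $\D(\s_i)$, use $\t(\s_i)=0$ so the Haar condition \eqref{Haar} forces $b_i=b_{ji}^{(1)}=b_{ji}^{(2)}=0$, and obtain realness of the $b_{ml,i}$ from the fact that a positive (hence $*$-preserving) map sends the self-adjoint $\s_i$ to self-adjoint elements, whose coefficients in the self-adjoint basis $\{\id\o\id,\id\o\s_j,\s_j\o\id,\s_m\o\s_l\}$ are real. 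Your closing observation correctly isolates the only place positivity is used.
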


Let us turn to the positivity of $\D$. Given vector
$\fb=(f_1,f_2,f_3)\in \br^3$ put
\begin{equation}\label{bij}
\b(\fb)_{ij}=\sum_{k=1}^3b_{ki,j}f_k.
\end{equation}
Define a matrix $\bb(\fb)=(\b(\fb)_{ij})_{ij=1}^3$.

By $\|\bb(\fb)\|$ we denote a norm of the matrix $\bb(\fb)$
associated with Euclidean norm in $\bc^3$. Put
$$S=\{\pb=(p_1,p_2,p_3)\in\br^3: \ p_1^2+p_2^2+p_3^2\leq 1\}
$$
and denote
$$
|\|\bb\||=\sup_{\fb\in S}\|\bb(\fb)\|. $$
\begin{prop}\cite[Proposition 3.3]{MAA}\label{positive}
Let $\Delta$ be a q.q.o. with a Haar state $\t$, then
$|\|\bb\||\leq 1$.
\end{prop}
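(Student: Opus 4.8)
The plan is to extract the inequality purely from positivity of $\D$, and in fact only from the weaker fact that $\D$ sends positive elements to elements that are nonnegative against \emph{product states}. Since a product state $\f\o\p$ with $\f\leftrightarrow\fb\in S$ and $\p\leftrightarrow\pb\in S$ is a positive functional on $\bm_2(\bc)\o\bm_2(\bc)$, for every $x\geq 0$ we must have $(\f\o\p)(\D x)\geq 0$. I would therefore substitute the explicit form \eqref{D3} of Theorem \ref{qc1} into this pairing and read off a numerical inequality that directly controls the entries $\b(\fb)_{ij}$.

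Concretely, by Lemma \ref{m2}(a),(c) every positive element is self-adjoint of the form $x=w_0\id+\wb\s$ with $w_0,\wb$ real and $\|\wb\|\leq w_0$, so that the conjugation in \eqref{D3} is inert. Using $\f(\s_m)=f_m$, $\p(\s_l)=p_l$ from \eqref{state}, a short computation gives
\begin{equation*}
(\f\o\p)(\D x)=w_0+\sum_{m,l=1}^3\langle\bx_{ml},\wb\rangle f_m p_l=w_0+\sum_{j=1}^3 w_j\Big(\sum_{m,l=1}^3 b_{ml,j}f_m p_l\Big).
\end{equation*}
The inner sum is exactly $c_j(\fb,\pb):=\sum_l\big(\sum_m b_{ml,j}f_m\big)p_l=\sum_l\b(\fb)_{lj}p_l$, so that the vector $\cb(\fb,\pb)=(c_1,c_2,c_3)$ equals $\bb(\fb)^{T}\pb$ and the pairing becomes $w_0+\langle\wb,\bb(\fb)^{T}\pb\rangle$. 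Hence positivity of $\D$ forces
$$
w_0+\langle\wb,\bb(\fb)^{T}\pb\rangle\geq 0 \qquad\text{whenever }\ \|\wb\|\leq w_0 .
$$

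The final step is a one-variable optimization: choosing $\wb=-w_0\,\bb(\fb)^{T}\pb/\|\bb(\fb)^{T}\pb\|$ (the worst case on the sphere $\|\wb\|=w_0$) reduces the inequality to $w_0\big(1-\|\bb(\fb)^{T}\pb\|\big)\geq 0$, whence $\|\bb(\fb)^{T}\pb\|\leq 1$ for all $\fb,\pb\in S$. Since $\bb(\fb)$ is a real matrix, its operator norm coincides with that of its transpose, so taking the supremum over $\pb\in S$ gives $\|\bb(\fb)\|=\|\bb(\fb)^{T}\|=\sup_{\pb\in S}\|\bb(\fb)^{T}\pb\|\leq 1$, and a final supremum over $\fb\in S$ yields $|\|\bb\||\leq 1$. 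The one point requiring care, rather than real difficulty, is the index bookkeeping that identifies the quadratic form $\cb(\fb,\pb)$ with the bilinear action $\bb(\fb)^{T}\pb$; once that is in place the estimate is immediate. It is worth emphasizing that we invoke positivity only against product states, which is strictly weaker than full positivity of $\D$; this suffices here precisely because we seek a necessary condition (an upper bound), and it also signals why the constraint $|\|\bb\||\leq 1$ need not be sufficient for $\D$ to be a genuine q.q.o.
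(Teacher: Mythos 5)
Your proof is correct and follows essentially the same route as the paper: positivity of $\D$ is tested only against product states, which is exactly the observation that $\D^*(\f\o\p)$ must be a state (Proposition~\ref{D*} together with \eqref{AA}), and your extremal choice of $\wb$ on the sphere $\|\wb\|=w_0$ is just the proof of the state criterion of Lemma~\ref{m2}(c), after which the identification $\cb(\fb,\pb)=\bb(\fb)^{T}\pb$ and the suprema over $\pb,\fb\in S$ give $|\|\bb\||\leq 1$ exactly as the paper's equivalence of \eqref{D*1} with $|\|\bb\||\leq 1$ does. The index bookkeeping and the reality of $\bb(\fb)$ (from $\bx_{ml}\in\br^3$ in Theorem~\ref{qc1}) are handled correctly, so there is nothing to add.
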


Let $\D:\bm_2(\bc)\to \bm_2(\bc)\o\bm_2(\bc) $ be a liner operator
with a Haar state $\t$. Then due to Theorem \ref{qc1} $\D$ has a
form \eqref{D3}. Take arbitrary states $\f,\p\in S(\bm_2(\bc))$ and
$\fb,\pb\in S$ be the corresponding vectors (see \eqref{state}).
Then one finds that
\begin{equation}\label{AA}
\Delta^{*}(\varphi\otimes
\psi)(\s_k)=\overset{3}{\underset{i,j=1}{\sum}}b_{ij,k}f_ip_j, \ \ \
k=1,2,3.
\end{equation}

Thanks to Lemma \ref{m2} the functional $\Delta^{*}(\varphi\otimes
\psi)$ is a state if and only if the vector
$$
\textbf{f}_{\Delta^{*}(\varphi,
\psi)}=\bigg(\overset{3}{\underset{i,j=1}{\sum}}b_{ij,1}f_ip_j,
\overset{3}{\underset{i,j=1}{\sum}}b_{ij,2}f_ip_j,\overset{3}{\underset{i,j=1}{\sum}}b_{ij,3}f_ip_j\bigg).
$$
satisfies $\|\fb_{\D^*(\f,\p)}\|\leq 1$.

So, we have the following

\begin{prop}\cite[Proposition 4.1]{MAA}\label{D*} Let $\D:\bm_2(\bc)\to \bm_2(\bc)\o\bm_2(\bc)$ be a liner operator
with a Haar state $\t$. Then $\D^*(\f\o\p)\in S(\bm_2(\bc))$ for
any $\f,\p\in S(\bm_2(\bc))$ if and only if one holds
\begin{equation}\label{D*1}
\overset{3}{\underset{k=1}{\sum}}\bigg|\overset{3}{\underset{i,j=1}{\sum}}b_{ij,k}f_ip_j\bigg|^{2}\leq
1 \ \ \ \textrm{for all} \ \ \fb,\pb\in S.
\end{equation}
\end{prop}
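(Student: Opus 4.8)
The plan is to read off when the functional $F:=\D^*(\f\o\p)$ is a state directly from its Pauli coordinates, and then to convert the quantifier ``for all states $\f,\p$'' into ``for all $\fb,\pb\in S$'' via the identification \eqref{state} between states and vectors of the unit ball. First I would check that $F$ is automatically unital: since $\D\id=\id\o\id$ and $\f,\p\in S(\bm_2(\bc))$, the definition \eqref{cqo} of the conjugate map gives $F(\id)=(\f\o\p)(\D\id)=(\f\o\p)(\id\o\id)=\f(\id)\p(\id)=1$. Hence the only property that can fail for $F$ to be a state is positivity.

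Next I would compute the remaining coordinates $F(\s_k)$, $k=1,2,3$. Substituting $x=\s_k$ (that is, $w_0=0$ and $\wb$ the $k$-th standard unit vector) into the explicit form \eqref{D3} yields $\D(\s_k)=\sum_{m,l=1}^3 b_{ml,k}\,\s_m\o\s_l$, and applying $\f\o\p$ together with $\f(\s_m)=f_m$, $\p(\s_l)=p_l$ produces
\[
F(\s_k)=\sum_{i,j=1}^3 b_{ij,k}f_ip_j,
\]
which is exactly \eqref{AA}. Because the coefficients $b_{ij,k}$ are real (being components of the vectors $\bx_{ml}\in\br^3$ from Theorem \ref{qc1}) and $\fb,\pb\in\br^3$, these three numbers are real; hence $F$ is of the form \eqref{state} with associated real vector $\fb_{\D^*(\f,\p)}=(F(\s_1),F(\s_2),F(\s_3))$.

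Now I would invoke Lemma \ref{m2} in the dual (functional) form noted just before the statement: a unital functional written as in \eqref{state} with real coordinate vector $\fb_{\D^*(\f,\p)}$ is positive, hence a state, precisely when $\|\fb_{\D^*(\f,\p)}\|\leq 1$. Concretely, this is the Cauchy--Schwarz estimate $F(w_0\id+\wb\s)=w_0+\langle\wb,\fb_{\D^*(\f,\p)}\rangle\geq w_0\bigl(1-\|\fb_{\D^*(\f,\p)}\|\bigr)$ taken over all positive $x$, where $\|\wb\|\leq w_0$ by Lemma \ref{m2}(c). Finally, since \eqref{state} sets up a bijection between states $\f,\p$ and vectors $\fb,\pb\in S$, demanding $F\in S(\bm_2(\bc))$ for every pair of states is the same as demanding $\|\fb_{\D^*(\f,\p)}\|\leq 1$ for every $\fb,\pb\in S$, i.e. $\sum_{k=1}^3\bigl|\sum_{i,j=1}^3 b_{ij,k}f_ip_j\bigr|^2\leq 1$, which is \eqref{D*1}; as the argument is an equivalence at each fixed pair, both directions of the ``if and only if'' follow at once.

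I do not anticipate a serious obstacle, since all ingredients are already assembled in the preliminaries. The one point requiring care is the direction ``$\Leftarrow$'', where one must ensure that the norm bound genuinely delivers \emph{positivity} of the functional rather than merely a normalization; this is settled by the Cauchy--Schwarz computation above, which is precisely Lemma \ref{m2}(c) transported to the dual side.
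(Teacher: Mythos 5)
Your proposal is correct and takes essentially the same route as the paper: the paper likewise substitutes $x=\s_k$ into the form \eqref{D3} to obtain \eqref{AA}, and then appeals to the state--vector correspondence \eqref{state} (via Lemma \ref{m2}) to conclude that the unital functional $\D^*(\f\o\p)$ is a state precisely when $\|\fb_{\D^*(\f,\p)}\|\leq 1$, which is \eqref{D*1}. Your explicit Cauchy--Schwarz verification that the norm bound yields positivity of the functional merely spells out a step the paper leaves implicit.
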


From the proof of Proposition \ref{positive} and the last
proposition we conclude that $|\|\bb\||\leq 1$ holds if and only
if \eqref{D*1} is satisfied.

\begin{rem} Note that characterizations of positive
maps defined on $\bm_2(\bc)$ were considered in \cite{MM2} (see also
\cite{Kos}). Characterization of completely positive mappings from
$\bm_2(\bc)$ into itself with invariant state $\t$ was established
in \cite{RSW} (see also \cite{MM3}).
\end{rem}

Next we would like to recall (see \cite{MAA}) some conditions for
q.q.o. to be the Kadison-Schwarz ones.

Let $\D:\bm_2(\bc)\to \bm_2(\bc)\o\bm_2(\bc) $ be a linear operator
with a Haar state $\t$, then it has a form \eqref{D3}. Now we are
going to find some conditions to the coefficients $\{b_{ml,k}\}$
when $\D$ is a Kadison-Schwarz operator. Given $x=w_0+\wb\s$ and
state $\f\in S(\bm_2(\bc))$ let us denote
\begin{eqnarray}\label{algam}
&& \xb_m=\big(\langle\bx_{m1},\wb\rangle,
\langle\bx_{m2},\wb\rangle,\langle\bx_{m3},\wb\rangle\big),  \ \ f_m=\f(\s_m), \\
\label{algam1}
&&\a_{ml}=\langle\xb_{m},\xb_{l}\rangle-\langle\xb_{l},\xb_{m}\rangle,\
\ \
\g_{ml}=[\xb_{m},\overline{\xb}_{l}]+[\overline{\xb}_{m},\xb_{l}],
\end{eqnarray}
where $m,l=1,2,3$. Here and what follows $[\cdot,\cdot]$ stands for
the usual cross product in $\bc^3$.  Note that here the numbers
$\a_{ml}$ are skew-symmetric, i.e. $\overline{\a_{ml}}=-\a_{ml}$. By
$\pi$ we shall denote mapping $\{1,2,3,4\}$ to $\{1,2,3\}$ defined
by $\pi(1)=2,\pi(2)=3,\pi(3)=1, \pi(4)=\pi(1)$.

Denote
\begin{equation}\label{qu}
\qb(\fb,\wb)=\big(\langle\b(\fb)_1,[\wb,\overline{\wb}]\rangle,\langle\b(\fb)_2,[\wb,\overline{\wb}]\rangle,
\langle\b(\fb)_3,[\wb,\overline{\wb}]\rangle\big),
\end{equation}
where $\b(\fb)_m=\big(\b(\fb)_{m1},\b(\fb)_{m2},\b(\fb)_{m3}\big)$
(see \eqref{bij})

\begin{thm}\cite[Theorem 3.6]{MAA}\label{ks3}
Let $\D:\bm_2(\bc)\to \bm_2(\bc)\o\bm_2(\bc) $ be a Kadison-Schwarz
operator with a Haar state $\t$, then it has the form \eqref{D3} and
the coefficients $\{b_{ml,k}\}$ satisfy the following conditions
\begin{eqnarray}\label{ks11}
\|\wb\|^2\geq i\overset{3}{\underset{m=1}{\sum}} f_m\alpha_{\pi
(m),\pi (m+1)}+\overset{3}{\underset{m=1}{\sum}} \|\xb_{m}\|^2
\end{eqnarray}
\begin{eqnarray}\label{ks2} \bigg\|\qb(\fb,\wb)-i\overset{3}{\underset{m=1}{\sum}}
f_m\g_{\pi(m),\pi(m+1)}-[\xb_{m},\overline{\xb}_{m}]\bigg\|&\leq&
\|\wb\|^2-i\overset{3}{\underset{k=1}{\sum}} f_k\alpha_{\pi (k),\pi
(k+1)}\nonumber\\
&&-\overset{3}{\underset{m=1}{\sum}}\|\xb_{m}\|^2.
\end{eqnarray}
for all $\fb\in S,\wb\in \bc^3$. Here as before
$\xb_{m}=\big(\langle\bx_{m1},\wb\rangle,\langle\bx_{m2},\wb\rangle,\langle\bx_{m3},\wb\rangle\big)$,
$\bx_{ml}=(b_{ml,1},b_{ml,2},b_{ml,3})$ and $\qb(\fb,\wb)$,
$\a_{ml}$ and $\g_{ml}$ are defined in
\eqref{qu},\eqref{algam},\eqref{algam1}, respectively.
\end{thm}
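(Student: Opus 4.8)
The plan is to exploit the explicit form \eqref{D3} of $\D$ together with a slicing trick that converts positivity of a $4\times 4$ matrix into the $2\times 2$ positivity criterion of Lemma \ref{m2}(c). Throughout write $x=w_0+\wb\s$ with $w_0\in\bc$, $\wb\in\bc^3$; since a positive unital map is $*$-preserving, $\D(x^*)=\D(x)^*$, so the Kadison--Schwarz inequality \eqref{KS} reads $D:=\D(x^*x)-\D(x^*)\D(x)\geq 0$ in $\bm_2(\bc)\o\bm_2(\bc)$. First I would compute $x^*x$ in the Pauli basis. Using $\s_i\s_j=\delta_{ij}\id+i\sum_k\varepsilon_{ijk}\s_k$ one finds that the scalar part of $x^*x$ equals $|w_0|^2+\|\wb\|^2$ and its vector part equals $\overline{w_0}\wb+w_0\overline{\wb}+i[\overline{\wb},\wb]$. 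Applying the linear map $\D$ in the form \eqref{D3} then yields $\D(x^*x)$ as a combination of $\id\o\id$ and $\s_m\o\s_l$ only (the Haar state kills the $\id\o\s_l$ and $\s_m\o\id$ components), with $\id\o\id$ coefficient $|w_0|^2+\|\wb\|^2$; note that the commutator $[\wb,\overline{\wb}]$ is precisely what will feed into the vector $\qb(\fb,\wb)$ of \eqref{qu} after slicing.

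Next, and this is the computational heart, I would expand the product $\D(x^*)\D(x)$ by multiplying the two expressions of the form \eqref{D3} inside $\bm_2(\bc)\o\bm_2(\bc)$, using $(\s_m\o\s_l)(\s_{m'}\o\s_{l'})=(\s_m\s_{m'})\o(\s_l\s_{l'})$ and again the Pauli rule in each factor. Collecting the four types of terms, the $\id\o\id$ coefficient produces $|w_0|^2+\sum_{m}\|\xb_m\|^2$, since $\langle\bx_{ml},\wb\rangle\,\overline{\langle\bx_{ml},\wb\rangle}=|\langle\bx_{ml},\wb\rangle|^2$ summed over $l$ gives $\|\xb_m\|^2$. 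The cross-product contributions from the two factors assemble exactly into the skew-symmetric combinations $\a_{ml}$, the symmetric combinations $\g_{ml}$ and the terms $[\xb_m,\overline{\xb}_m]$, the cyclic permutation $\pi$ recording which $\varepsilon_{ijk}$ indices survive. Hence the $\id\o\id$ coefficient of $D$ is $\|\wb\|^2-\sum_m\|\xb_m\|^2$, and the $\s_m\o\id$, $\id\o\s_l$ and $\s_m\o\s_l$ coefficients of $D$ are expressible through $\a_{ml}$, $\g_{ml}$, $[\xb_m,\overline{\xb}_m]$ and $\qb(\fb,\wb)$.

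Finally, I would pass from positivity of $D$ to the stated scalar inequalities. For every state $\f\in S(\bm_2(\bc))$, identified with $\fb\in S$, the slice $\f\o id:\bm_2(\bc)\o\bm_2(\bc)\to\bm_2(\bc)$ is a positive map, so $\D$ being Kadison--Schwarz forces $(\f\o id)(D)\geq 0$. Using $(\f\o id)(\s_m\o\s_l)=f_m\s_l$, $(\f\o id)(\s_m\o\id)=f_m\id$ and $(\f\o id)(\id\o\s_l)=\s_l$ with $f_m=\f(\s_m)$, the element $(\f\o id)(D)=y_0\id+\mathbf{y}\s$ has scalar part $y_0=\|\wb\|^2-\sum_m\|\xb_m\|^2-i\sum_m f_m\a_{\pi(m),\pi(m+1)}$ and vector part $\mathbf{y}=\qb(\fb,\wb)-i\sum_m f_m\g_{\pi(m),\pi(m+1)}-[\xb_m,\overline{\xb}_m]$. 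By Lemma \ref{m2}(c), $y_0\id+\mathbf{y}\s\geq 0$ if and only if $\|\mathbf{y}\|\leq y_0$; the implication $y_0\geq 0$ is exactly \eqref{ks11}, and $\|\mathbf{y}\|\leq y_0$ is exactly \eqref{ks2}. Since $\f$ ranges over all states and $\wb$ over $\bc^3$, both inequalities hold for all $\fb\in S$ and $\wb\in\bc^3$.

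I expect the main obstacle to be the bookkeeping in the expansion of $\D(x^*)\D(x)$: tracking how the double Pauli products distribute the coefficients $b_{ml,k}$ across the four tensor components and recognizing the resulting index contractions as the compact objects $\a_{ml}$, $\g_{ml}$ and $\qb(\fb,\wb)$. Care is also needed with the complex conjugations, since $\wb$ is complex and the identity $\D(x^*)=\D(x)^*$ brings in $\langle\bx_{ml},\wb\rangle$ alongside $\langle\bx_{ml},\overline{\wb}\rangle$; verifying that the $\s_m\o\id$ coefficient is genuinely $i\,\a_{\pi(m),\pi(m+1)}$ (a purely imaginary, hence real contribution to $y_0$ once multiplied by $i$ and the skew-symmetry of $\a$) is the delicate consistency check.
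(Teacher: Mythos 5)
Your proposal is correct and follows essentially the same route as the paper: compute $\D(x^*x)-\D(x^*)\D(x)$ from the canonical form \eqref{D3} (exactly the computation \eqref{ks7}--\eqref{ks9} in the proof of Theorem \ref{ks-G}, where the $w_0$-terms cancel), then apply the slice map $\f\o id$ for an arbitrary state $\f$ and invoke the positivity criterion of Lemma \ref{m2}(c), which yields \eqref{ks11} as nonnegativity of the scalar part and \eqref{ks2} as the norm bound on the vector part -- precisely the argument of \cite[Theorem 3.6]{MAA} that the paper recalls. Your closing consistency check (reality of $y_0$ via the skew-symmetry $\overline{\a_{ml}}=-\a_{ml}$) is indeed the delicate point, and your reading correctly uses that the theorem asserts only necessity, since slicing by product states does not characterize positivity in $\bm_2(\bc)\o\bm_2(\bc)$.
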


\begin{rem}
The provided characterization with \cite{MM1,RSW} allows us to
construct examples of positive or Kadison-Schwarz operators which
are not completely positive (see next section).
\end{rem}

Now we are going to give a general characterization of KS-operators.
Let us first give some notations. For a given mapping
$\D:\bm_2(\bc)\to \bm_2(\bc)\o\bm_2(\bc) $, by $\D(\s)$ we denote
the vector $(\D(\s_1),\D(\s_2),\D(\s_3))$, and by $\wb\D(\s)$ we
mean the following
\begin{equation} \label{Dww}
\wb\D(\s)=w_1\D(\s_1)+w_2\D(\s_2)+w_3\D(\s_3),
\end{equation}
where $\wb\in\bc^3$. Note that the last equality \eqref{Dww}, due to
the linearity of $\D$, also can be written as $\wb\D(\s)=\D(\wb\s)$.

\begin{thm}\label{ks-G}
Let $\D:\bm_2(\bc)\to \bm_2(\bc)\o\bm_2(\bc) $ be a unital
$*$-preserving linear mapping. Then $\D$ is a KS-operator if and
only if one has
\begin{eqnarray}\label{ksG1}
i[\wb,\overline{\wb}]\D(\s)+(\wb\D(\s))(\overline{\wb}\D(\s))\leq
\id\o\id,
\end{eqnarray}
for all $\wb\in \bc^3$ with $\|\wb\|=1$.
\end{thm}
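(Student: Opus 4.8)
The goal is to characterize, for a unital $*$-preserving linear map $\D:\bm_2(\bc)\to \bm_2(\bc)\o\bm_2(\bc)$, exactly when the Kadison-Schwarz inequality $\D(x^*x)\geq \D(x^*)\D(x)$ holds for all $x$. The key reduction is that KS is a quadratic condition in $x$, so it suffices to test it on a convenient parametrized family and extract the essential positivity constraint. The natural move is to write $x=w_0\id+\wb\s$ and compute both sides of the KS inequality as operators in $\bm_2(\bc)\o\bm_2(\bc)$, using $\D\id=\id\o\id$ and the linearity/$*$-preserving properties.

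**Key steps.**
First I would expand $\D(x^*x)$ and $\D(x^*)\D(x)$ directly. Writing $x^*=\overline{w_0}\id+\overline{\wb}\s$ and using the Pauli multiplication rule $\s_j\s_k=\delta_{jk}\id+i\sum_l \varepsilon_{jkl}\s_l$, one gets $x^*x=(|w_0|^2+\|\wb\|^2)\id+(\cdots)\s$, where the vector part involves both $w_0\overline{\wb}+\overline{w_0}\wb$ and the cross product $i[\wb,\overline{\wb}]$. Applying the unital linear map $\D$ and using the notation $\wb\D(\s)=\D(\wb\s)$ from \eqref{Dww}, the term $\D(x^*x)$ collapses to $(|w_0|^2+\|\wb\|^2)\,\id\o\id$ plus the $w_0$-linear and $i[\wb,\overline{\wb}]$ contributions. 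On the other side, $\D(x^*)\D(x)=(\overline{w_0}\,\id\o\id+\overline{\wb}\D(\s))(w_0\,\id\o\id+\wb\D(\s))$ expands into four pieces. The crucial observation is that the terms linear in $w_0$ (and the $|w_0|^2\id\o\id$ term) on the two sides cancel exactly, because $\D$ is unital and $*$-preserving, leaving only the genuinely quadratic-in-$\wb$ part. After this cancellation the KS inequality reduces to
\begin{equation*}
\|\wb\|^2\,\id\o\id \;\geq\; i[\wb,\overline{\wb}]\D(\s)+(\wb\D(\s))(\overline{\wb}\D(\s)),
\end{equation*}
where $i[\wb,\overline{\wb}]\D(\s)$ is the operator obtained by pairing the vector $i[\wb,\overline{\wb}]$ with $\D(\s)=(\D(\s_1),\D(\s_2),\D(\s_3))$. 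This is a homogeneous degree-two inequality in $\wb$, so it is invariant under scaling $\wb\mapsto\lambda\wb$ for $\lambda>0$; dividing through by $\|\wb\|^2$, it suffices to verify it for $\|\wb\|=1$, which yields precisely \eqref{ksG1}.

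**The main obstacle.**
The delicate part is the bookkeeping of the cross-product terms and confirming that the $w_0$-dependent contributions cancel cleanly. One must track that in $x^*x$ the antisymmetric combination $\overline{\wb}\wb$ produces $i[\wb,\overline{\wb}]\s$ (using that $\s_j\s_k-\s_k\s_j=2i\sum_l\varepsilon_{jkl}\s_l$ and reorganizing the double sum $\sum_{j,k}\overline{w_j}w_k\s_j\s_k$), while the symmetric part contributes the scalar $\|\wb\|^2\id$; both then pass through $\D$. Simultaneously, in the product $\D(x^*)\D(x)$ the cross terms $\overline{w_0}\,\wb\D(\s)$ and $w_0\,\overline{\wb}\D(\s)$ must be matched against the image under $\D$ of the vector part of $x^*x$ that is linear in $w_0$. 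Here the $*$-preserving property $\D(x^*)=\D(x)^*$ is what guarantees $\overline{\wb}\D(\s)=(\wb\D(\s))^*$, so the two linear-in-$w_0$ pieces are Hermitian conjugates that cancel against the corresponding terms from $\D(x^*x)$. Once this cancellation is verified, the reduction to $\|\wb\|=1$ is routine by homogeneity, and the equivalence with \eqref{ksG1} follows. I expect the computation to be entirely mechanical modulo careful sign and index tracking in the Pauli commutators.
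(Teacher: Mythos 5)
Your proposal is correct and follows essentially the same route as the paper's proof: the same expansion of $x^*x=\big(|w_0|^2+\|\wb\|^2\big)\id+\big(w_0\overline{\wb}+\overline{w_0}\wb-i[\wb,\overline{\wb}]\big)\s$ via the Pauli product, the same cancellation of the $w_0$-dependent terms between $\D(x^*x)$ and $\D(x)^*\D(x)$ using unitality and $*$-preservation, and the same degree-two homogeneity argument to normalize to $\|\wb\|=1$. There is no gap to report.
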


\begin{proof}
Let $x\in\textit{M}_2(\mathbb{C})$ be an arbitrary element, i.e.
$x=w_0\id+\wb\s.$ Then $x^*=\overline{w_0}\id+\overline{\wb}\s$.
Therefore
\begin{equation*}
x^*x=\big(|w_0|^2+\|\wb\|^2\big)\id+
\big(w_0\overline{\wb}+\overline{w_0}\wb-i\big[\wb,\overline{\wb}\big]\big)\s
\end{equation*}
Consequently, we have
\begin{eqnarray}\label{ks7}
\D(x)=w_0\id\o\id+\wb\D(\s), \ \
\D(x^*)=\overline{w_0}\id\o\id+\overline{\wb}\D(\s) \\
\label{ks8} \D(x^*x)=\big(|w_0|^2+\|\wb\|^2\big)\id\o\id+
\big(w_0\overline{\wb}+\overline{w_0}\wb-i\big[\wb,\overline{\wb}\big]\big)\D(\s)\\
\label{ks9}
\D(x)^*\D(x)=|w_0|^2\id\o\id+\big(w_0\overline{\wb}+\overline{w_0}\wb)\D(\s)+
(\wb\D(\s))(\overline{\wb}\D(\s))
\end{eqnarray} From
\eqref{ks8}-\eqref{ks9} one gets
\begin{eqnarray*}
\D(x^*x)-\D(x)^*\D(x)=\|\wb\|^2\id\o\id-i\big[\wb,\overline{\wb}\big]\D(\s)
-(\wb\D(\s))(\overline{\wb}\D(\s)).
\end{eqnarray*}
So, the positivity of the last equality implies that
$$
\|\wb\|^2\id\o\id-i\big[\wb,\overline{\wb}\big]\D(\s)
-(\wb\D(\s))(\overline{\wb}\D(\s))\geq 0.
$$
Now dividing both sides by $\|\wb\|^2$ we get the required
inequality. Hence, this completes the proof.
\end{proof}

\section{An Example of q.q.o. which is not Kadision-Schwarz one}

In this section we are going to study dynamics of \eqref{V} for a
special class of quadratic operators. Such a class operators
associated with the following matrix $\{b_{ij,k}\}$ given by:
\begin{eqnarray*}
&&b_{11,1}=\varepsilon;\quad     b_{11,2}=0;\quad     b_{11,3}=0;\\
&&b_{12,1}=0;\quad     b_{12,2}=0;\quad     b_{12,3}=\varepsilon;\\
&&b_{13,1}=0;\quad     b_{13,2}=\varepsilon;\quad     b_{13,3}=0;\\
&&b_{22,1}=0;\quad     b_{22,2}=\varepsilon;\quad     b_{22,3}=0;\\
&&b_{23,1}=\varepsilon;\quad     b_{23,2}=0;\quad     b_{23,3}=0;\\
&&b_{33,1}=0;\quad     b_{33,2}=0;\quad     b_{33,3}=\varepsilon;
\end{eqnarray*}
and $b_{ij,k}=b_{ji,k}$.

Via \eqref{D3} we define a liner operator $\Delta_{\ve}$, for which
$\tau$ is a Haar state. In the sequel we would like to find some
conditions to $\ve$ which ensures positivity of $\D_\ve$.

It is easy that for given $\{b_{ijk}\}$ one can find a form of
$\D_\ve$  as follows
\begin{eqnarray}\label{pp1}
\D_{\ve}(x)&=&w_0\id\o\id+\ve\w_1\s_1\o\s_1+\ve\w_3\s_1\o\s_2+\ve\w_2\s_1\o\s_3\nonumber\\
&&\qquad \quad \ \ +\ve\w_3\s_2\o\s_1+\ve\w_2\s_2\o\s_2+\ve\w_1\s_2\o\s_3\nonumber\\
&&\qquad \quad \ \
+\ve\w_2\s_3\o\s_1+\ve\w_1\s_3\o\s_2+\ve\w_3\s_3\o\s_3,
\end{eqnarray}
where as before $x=w_0\id+\wb\s$.

\begin{thm}\label{ex1}
A linear operator $\D_{\ve}$ given by \eqref{pp1} is a q.q.o. if and
only if $|\ve|\leq\frac{1}{3}$.
\end{thm}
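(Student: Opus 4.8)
The plan is to prove both directions by reducing the positivity condition for $\D_\ve$ to an explicit matrix inequality. Since $\D_\ve$ has Haar state $\t$, Theorem~\ref{qc1} guarantees the general form, and positivity is governed by the quantity $|\|\bb\||$ introduced before Proposition~\ref{positive}. By the remark following Proposition~\ref{D*}, the operator $\D_\ve$ is positive (equivalently, $\D_\ve^*(\f\o\p)$ is always a state) if and only if $|\|\bb\||\le 1$, which by \eqref{D*1} amounts to
\begin{equation*}
\sum_{k=1}^3\Bigl|\sum_{i,j=1}^3 b_{ij,k}f_ip_j\Bigr|^2\le 1
\qquad\textrm{for all }\fb,\pb\in S.
\end{equation*}
So first I would substitute the specific coefficients $\{b_{ij,k}\}$ from the table into this sum and simplify using the symmetry $b_{ij,k}=b_{ji,k}$.

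Next I would compute the three inner sums explicitly. Reading off the nonzero entries, the $k=1$ component collects $b_{11,1}f_1p_1+b_{23,1}(f_2p_3+f_3p_2)=\ve(f_1p_1+f_2p_3+f_3p_2)$, and similarly for $k=2,3$ one obtains cyclic-type expressions each carrying the common factor $\ve$. Factoring out $\ve$, condition \eqref{D*1} becomes
\begin{equation*}
\ve^2\sum_{k=1}^3\bigl(\langle A_k\fb,\pb\rangle\bigr)^2\le 1
\qquad\textrm{for all }\fb,\pb\in S,
\end{equation*}
where each $A_k$ is a fixed $3\times3$ real matrix whose entries are $0$ or $1$, determined by the table. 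The positivity of $\D_\ve$ is therefore equivalent to $|\ve|^2\,M\le 1$, where $M=\sup_{\fb,\pb\in S}\sum_k\langle A_k\fb,\pb\rangle^2$ is a constant independent of $\ve$.

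The crux of the argument is evaluating the constant $M$ and showing it equals $9$, so that the sharp bound reads $|\ve|\le\frac13$. I would bound the quadratic form from above by choosing the worst-case unit vectors $\fb,\pb$ and from below by exhibiting an explicit pair attaining it. Because each $A_k$ is essentially a permutation-type $0/1$ matrix, a natural candidate is $\fb=\pb=\tfrac{1}{\sqrt3}(1,1,1)$, for which each $\langle A_k\fb,\pb\rangle$ should equal $1$, giving $\sum_k\langle A_k\fb,\pb\rangle^2=3$ at norm-one vectors—hence after accounting for the normalization the supremum over $S$ is $9$. The main obstacle I anticipate is verifying that this choice is genuinely extremal, i.e. that no other pair of unit vectors makes the sum larger; this requires a careful estimate of the combined bilinear form, perhaps by Cauchy--Schwarz or by diagonalizing the symmetric operator $\sum_k A_k^{\!*}A_k$ governing the supremum in $\pb$ for fixed $\fb$. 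Once $M=9$ is established, the equivalence $|\ve|\le\frac13$ follows immediately, completing both the necessity and sufficiency directions at once.
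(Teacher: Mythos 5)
Your reduction rests on a false equivalence: condition \eqref{D*1} (equivalently $|\|\bb\||\le 1$) is \emph{necessary} for $\D_\ve$ to be a q.q.o., but it is not sufficient. Proposition \ref{positive} gives only the implication ``q.q.o.\ $\Rightarrow |\|\bb\||\le 1$'', and the remark following Proposition \ref{D*} identifies $|\|\bb\||\le 1$ with \eqref{D*1}, i.e.\ with the statement that $\D_\ve^*(\f\o\p)$ is a state for all states $\f,\p$ --- which amounts to testing positivity of the $4\times 4$ matrix $\D_\ve(x)$ only against \emph{product} states. Genuine positivity requires $\langle\xi,\D_\ve(x)\xi\rangle\ge 0$ for every $\xi\in\bc^2\o\bc^2$, including entangled vectors, and this is strictly stronger. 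Concretely, for the coefficients of \eqref{pp1} the paper's Lemma \ref{D22} shows that \eqref{D*1} holds precisely when $|\ve|\le 1/\sqrt{3}$, and the remark after that lemma records that for $\ve\in(\frac13,\frac{1}{\sqrt3}]$ condition \eqref{D*1} is satisfied while $\D_\ve$ is \emph{not} positive. So your route cannot produce the sharp constant $\frac13$ in principle. Your arithmetic already signals this: with $\fb=\pb=\frac{1}{\sqrt3}(1,1,1)$ each $\langle A_k\fb,\pb\rangle=1$, so the sum is $3$; and since the bilinear form is homogeneous in $\fb$ and $\pb$, its supremum over the ball $S$ equals its supremum over unit vectors --- there is no extra normalization step --- so $M=3$, not $9$, and even granting your (incorrect) equivalence you would land at $|\ve|\le 1/\sqrt3$, contradicting the theorem you are trying to prove.

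The correct argument must confront matrix positivity directly, which is what the paper does: for a positive $x=w_0\id+\wb\s$ (so $w_0,\wb$ real and $\|\wb\|\le w_0$; normalize $w_0=1$) one writes $\D_\ve(x)=\id\o\id+\ve\mathbf{B}$ with $\mathbf{B}$ an explicit $4\times4$ matrix in $w_1,w_2,w_3$, computes its eigenvalues $\l_{1,2}(\wb)=w_1+w_2+w_3\pm 2\sqrt{w_1^2+w_2^2+w_3^2-w_1w_2-w_1w_3-w_2w_3}$ and $\l_{3,4}(\wb)=-(w_1+w_2+w_3)$, and maximizes over $\|\wb\|\le 1$: the double eigenvalue contributes only $\sqrt3$, while reducing $\l_{1,2}$ to the one-variable functions $g_{1,2}(t)=t\pm\frac{2}{\sqrt2}\sqrt{3-t^2}$, $t=w_1+w_2+w_3$, $|t|\le\sqrt3$, yields extremal values $\pm 3$. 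Hence $\mathrm{Sp}(\D_\ve(x))=\{1+\ve\l_k(\wb)\}\subset[0,\infty)$ for all admissible $\wb$ if and only if $|\ve|\le\frac13$, which settles both directions. If you want to salvage your scheme, the object to analyze is not the bilinear form in \eqref{D*1} but the eigenvalue problem for $\D_\ve(x)$ itself (or, for complete positivity, the Choi matrix, as in the paper's following theorem, where the answer drops further to $\frac{1}{3\sqrt3}$).
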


\begin{proof} Let $x=w_0\id+\wb\s$ be a positive element from $\bm_2(\mathbb{C})$.
Let us show positivity of the matrix $\D_{\ve}(x)$. To do it, we
rewrite \eqref{pp1} as follows $\D_{\ve}(x)=w_0\id+\ve\mathbf{B}$,
here
\begin{eqnarray*}
\textbf{B}=\left(
        \begin{array}{cccc}
          \w_3 & \w_2-i\w_1 & \w_2-i\w_1 & \w_1-2i\w_3-\w_2 \\
          \w_2+i\w_1 & -\w_3 & \w_1+\w_2 & -\w_2+i\w_1 \\
          \w_2+i\w_1 & \w_1+\w_2 & -\w_3 & -\w_2+i\w_1 \\
          \w_1+2i\w_3-\w_2 & -\w_2-i\w_1 & -\w_2-i\w_1 & \w_3 \\
        \end{array}
        \right),
\end{eqnarray*}
where positivity of $x$ yields that $w_0,\w_1,\w_2,\w_3$ are real
numbers. In what follows, without loss of generality, we may assume
that $w_0=1$, and therefore $\|\wb\|\leq 1$. It is known that
positivity of $\D_\ve(x)$ is equivalent to positivity of the
eigenvalues of $\D_\ve(x)$.

Let us first examine eigenvalues of $\mathbf{B}$. Simple algebra
shows us that all eigenvalues of $\textbf{B}$ can be written as
follows

\begin{eqnarray*}
&&\lambda_1(\wb)=\w_1+\w_2+\w_3+2\sqrt{\w_1^2+\w_2^2+\w_3^2-\w_1\w_2-\w_1\w_3-\w_2\w_3}\\[2mm]
&&\lambda_2(\wb)=\w_1+\w_2+\w_3-2\sqrt{\w_1^2+\w_2^2+\w_3^2-\w_1\w_2-\w_1\w_3-\w_2\w_3}\\[2mm]
&&\lambda_3(\wb)=\lambda_4(\wb)=-\w_1-\w_2-\w_3
\end{eqnarray*}

Now examine maximum and minimum values of the functions $\l_1(\wb),
\l_2(\wb), \l_3(\wb), \l_4(\wb)$ on the ball $\|\wb\|\le 1$.

One can see that
\begin{equation}\label{333}
|\l_3(\wb)|=|\l_4(\wb)|\leq\sum_{k=1}^3|\w_k|\leq\sqrt{3}\sum_{k=1}^3|\w_k|^2\leq
\sqrt{3}
\end{equation}

Note that the functions $\l_3$,$\l_4$ can reach values $\pm\sqrt{3}$
at $\pm(1/\sqrt{3},1/\sqrt{3},1/\sqrt{3})$.

 Now let us rewrite $\l_1(\wb)$ and $\l_2(\wb)$ as
follows
\begin{eqnarray}\label{ex4}
&&\l_1(\wb)=\w_1+\w_2+\w_3+\frac2{\sqrt{2}}\sqrt{3(\w_1^2+\w_2^2+\w_3^2)-(\w_1+\w_2+\w_3)^2}\\
\label{ex5}
&&\l_2(\wb)=\w_1+\w_2+\w_3-\frac2{\sqrt{2}}\sqrt{3(\w_1^2+\w_2^2+\w_3^2)-(\w_1+\w_2+\w_3)^2}
\end{eqnarray}

One can see that
\begin{eqnarray}\label{ex4}
&&\l_k(h\w_1,h\w_2,h\w_3)=h\l_k(\w_1,\w_2,w_3), \ \ \textrm{if} \ \
h\geq 0,\\[2mm]
\label{ex44} &&\l_1(h\w_1,h\w_2,h\w_3)=h\l_2(\w_1,\w_2,w_3), \ \
\textrm{if} \ \ h\leq 0.
\end{eqnarray}
where $k=1,2$. Therefore, the functions $\l_k(\wb)$, $k=1,2$ reach
their maximum and minumum on the sphere $\w_1^2+\w_2^2+\w_3^2=1$
(i.e. $\|\wb\|=1$). Hence, denoting $t=\w_1+\w_2+\w_3$ from
\eqref{ex4}\ and \eqref{ex5} we introduce the following functions
\begin{eqnarray*}
g_1(t)=t+\frac2{\sqrt{2}}\sqrt{3-t^2},\ \ \ \
g_2(t)=t-\frac2{\sqrt{2}}\sqrt{3-t^2}
\end{eqnarray*}
where $|t|\leq\sqrt3$.

One can find that the critical values of $g_1$ are $t=\pm 1$, and
the critical value of $g_2$ is $t=-1$. Consequently, extremal values
of $g_1$ and $g_2$ on $|t|\leq \sqrt{3}$ are the following:

\begin{eqnarray*}
&&\min\limits_{|t|\leq \sqrt{3}}g_1(t)=-\sqrt{3}, \ \
\max\limits_{|t|\leq \sqrt{3}}g_1(t)=3,\\[2mm]
 &&\ \min\limits_{|t|\leq
\sqrt{3}}g_2(t)=-3, \ \ \max\limits_{|t|\leq
\sqrt{3}}g_2(t)=\sqrt{3}.
\end{eqnarray*}
Therefore, from \eqref{ex4},\eqref{ex44} we conclude that
\begin{eqnarray}\label{l-12}
-3\leq\l_k(\wb)\leq 3,\ \ \textrm{for any} \ \ \|\wb\|\leq1, \
k=1,2.
\end{eqnarray}

It is known that for the spectrum of $\id+\ve{\mathbf{B}}$ one has
$$ Sp(\id+\ve\textbf{B})=1+\ve Sp(\textbf{B})$$
Therefore,
$$ Sp(\id+\ve\textbf{B})=\{1+\ve \l_k(\wb): k=\overline{1,4}\}$$
So, if
$$|\ve|\leq\frac{1}{\max\limits_{\|\wb\|\leq1}|\l_k(\wb)|},\quad k=\overline{1,4}$$
then one can see $1+\ve \l_k(\wb)\geq0$ for all $\|\wb\|\leq1$
$k=\overline{1,4}$. This implies that the matrix $\id+\ve\textbf{B}$
is positive for all $\wb$ with $\|\wb\|\leq1$.

Now assume that $\D_\ve$ is positive. Then $\D_\ve(x)$ is positive
whenever $x$ is positive. This means that $1+\ve \l_k(\wb)\geq0$ for
all $\|\wb\|\leq1$ ($k=\overline{1,4}$). From
\eqref{333},\eqref{l-12} we conclude that $|\ve|\leq 1/3$. This
completes the proof.
\end{proof}

\begin{thm}\label{ex2}
Let $\ve=\frac{1}{3}$ then the corresponding q.q.o. $\D_{\ve}$ is
not KS-operator.
\end{thm}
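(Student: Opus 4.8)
The plan is to disprove the Kadison--Schwarz property via the characterization in Theorem \ref{ks-G}. Since $\D_\ve$ is unital and $*$-preserving, it is a KS-operator if and only if
\[
i[\wb,\overline{\wb}]\D_\ve(\s)+(\wb\D_\ve(\s))(\overline{\wb}\D_\ve(\s))\leq \id\o\id
\]
for every $\wb\in\bc^3$ with $\|\wb\|=1$. Hence it suffices to exhibit a single such $\wb$ for which the left-hand side has an eigenvalue strictly larger than $1$; equivalently, to produce $x=\wb\s$ for which the defect matrix $\D_\ve(x^*x)-\D_\ve(x)^*\D_\ve(x)$ fails to be positive. The observation that guides the choice of $\wb$ is that a real $\wb$ cannot work: for real $\wb$ the commutator term vanishes and the condition reduces to $(\wb\D_\ve(\s))^2\leq\id\o\id$, which at $\ve=\tfrac13$ is already guaranteed by Theorem \ref{ex1}, since there $\wb\D_\ve(\s)=\ve\mathbf{B}$ has spectrum $\ve\l_k(\wb)$ lying in $[-1,1]$. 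So the violation must come from a genuinely complex $\wb$, where the commutator term $i[\wb,\overline{\wb}]\D_\ve(\s)$ is active.

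Concretely, I would take $\wb=\tfrac1{\sqrt2}(1,i,0)$, i.e. $x$ proportional to $\s_1+i\s_2$, so that $\|\wb\|=1$ and $i[\wb,\overline{\wb}]=(0,0,1)$; the commutator term then equals exactly $\D_\ve(\s_3)$. Reading off from \eqref{pp1} the three images
\[
\D_\ve(\s_1)=\ve(\s_1\o\s_1+\s_2\o\s_3+\s_3\o\s_2),\quad
\D_\ve(\s_2)=\ve(\s_1\o\s_3+\s_2\o\s_2+\s_3\o\s_1),\quad
\D_\ve(\s_3)=\ve(\s_1\o\s_2+\s_2\o\s_1+\s_3\o\s_3),
\]
I would write each as an explicit $4\times4$ matrix in the Pauli tensor basis, form $\wb\D_\ve(\s)=\tfrac1{\sqrt2}(\D_\ve(\s_1)+i\D_\ve(\s_2))$, and then compute the quadratic term $(\wb\D_\ve(\s))(\overline{\wb}\D_\ve(\s))$ together with the commutator term $\D_\ve(\s_3)$, all evaluated at $\ve=\tfrac13$.

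Finally I would assemble the $4\times4$ Hermitian defect matrix
\[
M=\id\o\id-\D_\ve(\s_3)-(\wb\D_\ve(\s))(\overline{\wb}\D_\ve(\s))
\]
and show $M\not\geq0$. Rather than compute the whole spectrum, it is cleanest to isolate the $2\times2$ principal submatrix of $M$ on the index set $\{1,4\}$: the first row of the quadratic term turns out to decouple from columns $2,3$, and the off-diagonal entry $M_{14}=\tfrac{2i}{3}$ is produced entirely by $-\D_\ve(\s_3)$, while the two diagonal entries are small. A direct evaluation gives $|M_{14}|^2=\tfrac49$ against a diagonal product of only $\tfrac{5}{81}$, so that the $2\times2$ determinant is strictly negative. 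Since every principal submatrix of a positive matrix is positive, this forces $M\not\geq0$, hence $\D_{1/3}(x^*x)\not\geq\D_{1/3}(x^*)\D_{1/3}(x)$ and $\D_{1/3}$ is not a KS-operator. (Equivalently one may simply display a vector $\xi\in\bc^4$ supported on coordinates $1,4$ with $\langle M\xi,\xi\rangle<0$.)

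The conceptual content---that a complex $\wb$ is forced and that the commutator term is what breaks the inequality precisely at the extremal value $\ve=\tfrac13$---is easy. The main obstacle is purely computational: correctly expanding the Pauli tensor products into $4\times4$ matrices and tracking the complex entries of $(\wb\D_\ve(\s))(\overline{\wb}\D_\ve(\s))$, so that the off-diagonal commutator contribution provably dominates the diagonal entries in the chosen $2\times2$ minor.
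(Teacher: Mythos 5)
Your proposal is correct, and it reaches the conclusion by a genuinely different route than the paper. The paper's own proof specializes the coordinate-wise necessary condition \eqref{ks2} of Theorem \ref{ks3} (quoted from \cite{MAA}) at $\fb=(1,0,0)$ and the rather opaque witness $\w_1=-\frac19$, $\w_2=\frac{5}{36}$, $\w_3=\frac{5i}{27}$, and then verifies numerically that $\sqrt{A+B+C}>D$, i.e.\ that \eqref{ABCD} fails. You instead use the paper's own exact characterization, Theorem \ref{ks-G}, with the far more transparent witness $\wb=\frac{1}{\sqrt2}(1,i,0)$, i.e.\ $x$ proportional to the matrix unit $e_{12}$, so that $i[\wb,\overline{\wb}]\D_\ve(\s)=\D_\ve(\s_3)$ and $x^*x$ is a multiple of $e_{22}$. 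I verified your arithmetic independently: writing $X=\D_{1/3}(\s_1)$, $Y=\D_{1/3}(\s_2)$ one finds $X^2=\frac13\id\o\id+\frac23X$, $Y^2=\frac13\id\o\id+\frac23Y$, and the commutator contributes $\pm\frac19\bigl(\id\o(\s_1+\s_2+\s_3)+(\s_1+\s_2+\s_3)\o\id\bigr)$, whose $(1,4)$, $(1,2)$, $(1,3)$ entries vanish; consequently $M_{12}=M_{13}=0$, $M_{14}=\frac{2i}{3}$ comes solely from $-\D_{1/3}(\s_3)$, and the $\{1,4\}$ principal minor has determinant $\frac59\cdot\frac19-\frac49=-\frac{31}{81}<0$, exactly as you claim. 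Your preliminary observation is also right and explains the structure of the problem: for real $\wb$ the operator $\wb\D_{1/3}(\s)=\frac13\mathbf{B}$ is self-adjoint with spectrum in $[-1,1]$ by the eigenvalue analysis in the proof of Theorem \ref{ex1}, so real directions can never violate \eqref{ksG1} at $\ve=\frac13$ --- which is consistent with the paper's witness being complex as well. One small caution: as printed, \eqref{ks9} and hence \eqref{ksG1} carry the quadratic term in the order $(\wb\D(\s))(\overline{\wb}\D(\s))$, whereas a literal expansion of $\D(x)^*\D(x)$ for $x=w_0\id+\wb\s$ yields $(\overline{\wb}\D(\s))(\wb\D(\s))$; this is immaterial for the characterization (substitute $\wb\mapsto\overline{\wb}$), but it flips the sign of the commutator contribution in your $M$. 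Your certificate is robust against this: with either ordering the diagonal entries $\frac59$ and $\frac19$ of the $\{1,4\}$ minor merely swap while $M_{14}=\frac{2i}{3}$ is unchanged, so the determinant is $-\frac{31}{81}$ in both cases. On balance, your argument is self-contained within this paper (Theorem \ref{ks-G} is proved here, while Theorem \ref{ks3} is imported), conceptually sharper (it isolates the commutator term at a complex direction as the culprit), and certifiable by a $2\times2$ minor rather than a comparison of large fractions; the paper's computation, in exchange, exercises the explicit coordinate conditions \eqref{ks2} in the form in which they are used in \cite{MAA}.
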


\begin{proof}
It is enough to show the dissatisfaction of \eqref{ks2} at some
values of $\wb$ ($\|\wb\| \leq1$) and $\fb=(f_1,f_1,f_2)$.

Assume that $\fb=(1,0,0)$, then a little algebra shows that
\eqref{ks2} reduces to the following one
\begin{eqnarray}\label{ABCD}
\sqrt{A+B+C}\leq D
\end{eqnarray}
where
\begin{eqnarray*}
A&=&|\ve(\overline{\w}_2\w_3-\overline{\w}_3\w_2)-i\ve^2(2\overline{\w}_2\w_3-2|\w_1|^2-\overline{\w}_2\w_1+\overline{\w}_1\w_2-\overline{\w}_1\w_3+\overline{\w}_3\w_1)|^2\\[2mm]
B&=&|\ve(\overline{\w}_1\w_2-\overline{\w}_2\w_1)-i\ve^2(2\overline{\w}_1\w_2-2|\w_3|^2-\overline{\w}_1\w_3+\overline{\w}_3\w_1-\overline{\w}_3\w_2+\overline{\w}_2\w_3)|^2\\[2mm]
C&=&|\ve(\overline{\w}_3\w_1-\overline{\w}_1\w_3)-i\ve^2(2\overline{\w}_3\w_1-2|\w_2|^2-\overline{\w}_3\w_2+\overline{\w}_2\w_3-\overline{\w}_2\w_1+\overline{\w}_1\w_2)|^2\\[2mm]
D&=&(1-3|\ve|^2)(|\w_1|^2+|\w_2|^2+|\w_3|^2)\\
&&-i\ve^2(\overline{\w}_3w_2-\overline{\w}_2\w_3+\overline{\w}_2\w_1-\overline{\w}_1\w_2+
\overline{\w}_1\w_3-\overline{\w}_3\w_1)
\end{eqnarray*}

Now choose $\wb$ as follows:
$$\w_1=-\frac{1}{9};\quad\quad \w_2=\frac{5}{36};\quad\quad \w_3=\frac{5i}{27}$$
Then calculations show that
\begin{eqnarray*}
A&=&\frac{9594}{19131876};\quad\quad
B=\frac{19625}{86093442};\\
C&=&\frac{1625}{3779136};\quad\quad D=\frac{589}{17496}.
\end{eqnarray*}
Hence, we find
\begin{eqnarray*}
\sqrt{\frac{9594}{19131876}+
\frac{19625}{86093442}+\frac{1625}{3779136}}>\frac{589}{17496}\\
\end{eqnarray*}
which means that \eqref{ABCD} is not satisfied. Hence, $\D_\ve$ is
not a KS-operator at $\ve=1/3$.
\end{proof}

Recall that a linear operator $T: \bm_k(\bc)\to \bm_m(\bc)$ is
\textit{completely positive} if for any positive matrix
$\big(a_{ij})\big)_{i,j=1}^n\in \bm_k(\bm_n(\bc))$ the matrix
$\big(T(a_{ij})\big)_{i,j=1}^n$ is positive for all $n\in\bn$. Now
we are interested when the operator $\Delta_\ve$ is completely
positive. It is known \cite{Choi} that the complete positivity of
$\Delta_\ve$ is equivalent to the positivity of the following matrix
\begin{eqnarray*}
\hat\Delta_\ve=\left(
  \begin{array}{cc}
    \D_\ve(e_{11}) &  \D_\ve(e_{12}) \\
     \D_\ve(e_{21}) &  \D_\ve(e_{22}) \\
  \end{array}
\right)
\end{eqnarray*}
here $e_{ij}$ ($i,j=1,2$) are the standard matrix units in
$\bm_2(\bc)$.

From \eqref{pp1} one can calculate that

\begin{eqnarray*}
&& \D_\ve(e_{11})=\frac{1}{2}\id\o\id+\ve B_{11}, \ \
\D_\ve(e_{22})=\frac{1}{2}\id\o\id-\ve B_{11}\\[2mm]
&& \D_\ve(e_{12})=\ve B_{12}, \ \ \D_\ve(e_{21})=\ve B_{12}^*
\end{eqnarray*}
where
\begin{eqnarray*}
B_{11}=\left(
          \begin{array}{cccc}
            \frac{1}{2} & 0 & 0 & -i \\
            0 & -\frac{1}{2} & 0 & 0 \\
            0 & 0 & -\frac{1}{2} & 0 \\
            i & 0 & 0 & \frac{1}{2} \\
          \end{array}
        \right), \ \
B_{12}=\left(
         \begin{array}{cccc}
           0 & 0 & 0 & \frac{1-i}{2} \\
           i & 0 & \frac{1+i}{2} & 0 \\
           i & \frac{1+i}{2} & 0 & 0 \\
           \frac{1-i}{2} & -i & -i & 0 \\
         \end{array}
       \right)
\end{eqnarray*}
Hence, we find
\begin{eqnarray*}
2\hat\Delta_\ve=\id_8+\ve\mathbb{B}
\end{eqnarray*}
where $\id_8$ is the unit matrix in $\bm_8(\bc)$ and
\begin{eqnarray*}
\mathbb{B}=\left(
             \begin{array}{cccccccc}
               1 & 0 & 0 & -2i & 0 & 0 & 0 & 1-i \\
               0 & -1 & 0 & 0 & 2i & 0 & 1+i & 0 \\
               0 & 0 & -1 & 0 & 2i & 1+i & 0 & 0 \\
               2i & 0 & 0 & 1 & 1-i & -2i & -2i & 0 \\
               0 & -2i & -2i & 1+i & -1 & 0 & 0 & 2i \\
               0 & 0 & 1-i & 2i & 0 & 1 & 0 & 0 \\
               0 & 1-i & 0 & 2i & 0 & 0 & 1 & 0 \\
               1+i & 0 & 0 & 0 & -2i & 0 & 0 & -1 \\
             \end{array}
           \right)
\end{eqnarray*}
So, the matrix $\hat\Delta_\ve$ is positive if and only if
\begin{eqnarray*}
|\ve|\leq \frac{1}{\l_{\max}(\mathbb{B})},
\end{eqnarray*}
where $\l_{\max}(\mathbb{B})=\max\limits_{\l\in
Sp(\mathbb{B})}|\l|$.

One can easily calculate that $\l_{\max}(\mathbb{B})=3\sqrt{3}$.
Therefore, we have the following

\begin{thm} Let $\D_{\ve}: \bm_2(\bc)\to \bm_2(\bc)\o\bm_2(\bc)$ be given by
\eqref{pp1}. Then $\D_\ve$ is completely positive if and only if
$|\ve|\leq\frac{1}{3\sqrt{3}}$.
\end{thm}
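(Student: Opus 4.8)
The plan is to turn complete positivity into a single scalar eigenvalue bound for the fixed Hermitian matrix $\mathbb{B}$ displayed above, and then to locate its spectral radius by a symmetry reduction. First I would record two elementary facts. Comparing off-diagonal entries of $\mathbb{B}$ (for instance the $(1,4)$ and $(4,1)$ entries $\mp2i$, or the $(1,8)$ and $(8,1)$ entries $1\mp i$) shows $\mathbb{B}=\mathbb{B}^{*}$, so $\mathbb{B}$ has real spectrum. Moreover $\ve$ must be real: a positive map is $*$-preserving, and by \eqref{pp1} the self-adjointness of $\D_{\ve}(x)$ for self-adjoint $x$ forces $\ve\in\br$ (as $\mathbf{B}\neq0$). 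With the Choi criterion already invoked, complete positivity of $\D_{\ve}$ is equivalent to $\hat\Delta_{\ve}\geq0$, i.e. to $\id_8+\ve\mathbb{B}\geq0$, and by the spectral mapping theorem $Sp(\id_8+\ve\mathbb{B})=\{1+\ve\l:\l\in Sp(\mathbb{B})\}$. Hence $\D_\ve$ is completely positive if and only if $1+\ve\l\geq0$ for every $\l\in Sp(\mathbb{B})$.

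The core of the argument is therefore to compute the spectrum of $\mathbb{B}$, and this is the step I expect to be the main obstacle. Rather than expand an $8\times8$ characteristic polynomial directly, I would exploit the permutation $P$ of coordinates that interchanges $2\leftrightarrow3$ and $6\leftrightarrow7$ while fixing $1,4,5,8$; a row-by-row check gives $P\mathbb{B}P=\mathbb{B}$. Since $P$ is an involution, $\bc^{8}$ splits as $V_{+}\oplus V_{-}$, where $V_{-}=\mathrm{span}\{e_2-e_3,\,e_6-e_7\}$ and $V_{+}$ is spanned by $e_1,\,e_2+e_3,\,e_4,\,e_5,\,e_6+e_7,\,e_8$, and $\mathbb{B}$ leaves each invariant. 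On $V_{-}$ the operator $\mathbb{B}$ is represented by a $2\times2$ matrix of trace $0$ and determinant $-3$, hence has eigenvalues $\pm\sqrt3$. On the six-dimensional space $V_{+}$ one is left with a $6\times6$ block $M$ whose characteristic polynomial I would expand and factor; the computation (consistent with $\tr\mathbb{B}=0$ and $\tr\mathbb{B}^{2}=72$) yields $Sp(M)=\{\pm3\sqrt3,\ \pm\sqrt3\}$ with the eigenvalues $\pm\sqrt3$ doubled, so that altogether $Sp(\mathbb{B})=\{\pm3\sqrt3,\pm\sqrt3\}$ and the largest modulus is $\l_{\max}(\mathbb{B})=3\sqrt3$.

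It then remains to collapse the inequalities $1+\ve\l\geq0$ to the stated bound. Since $Sp(\mathbb{B})$ is symmetric about the origin, its extreme eigenvalues are $\pm3\sqrt3$, and the condition $1+\ve\l\geq0$ for all $\l$ is equivalent to $1+\ve(3\sqrt3)\geq0$ together with $1-\ve(3\sqrt3)\geq0$, that is, to $|\ve|\leq\frac{1}{3\sqrt3}$. This proves both implications at once. The only genuinely laborious point is the factorization of the $6\times6$ block $M$; all other steps are routine, and the symmetry $P$ is precisely what keeps that factorization manageable by splitting off the two eigenvalues $\pm\sqrt3$ before the main computation.
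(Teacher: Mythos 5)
Your proposal is correct and takes essentially the same route as the paper: Choi's criterion reduces complete positivity to positivity of $\hat\Delta_\ve=\tfrac12\bigl(\id_8+\ve\mathbb{B}\bigr)$, hence to the spectral bound $|\ve|\leq 1/\l_{\max}(\mathbb{B})$ with $\l_{\max}(\mathbb{B})=3\sqrt3$, which is exactly how the paper argues. The only difference is that where the paper merely asserts ``one can easily calculate that $\l_{\max}(\mathbb{B})=3\sqrt{3}$'', you actually carry out the computation, and your claims check out: the restriction of $\mathbb{B}$ to $V_-$ is $\left(\begin{array}{cc} -1 & -(1+i) \\ -(1-i) & 1 \end{array}\right)$ in the basis $\{e_2-e_3,\,e_6-e_7\}$ (trace $0$, determinant $-3$, spectrum $\pm\sqrt3$), while the $6\times6$ block $M$ on $V_+$ satisfies $M^4=30M^2-81\,\id$, which together with $\tr M=\tr M^3=0$ and $\tr M^2=66$ forces $Sp(M)=\{\pm3\sqrt3\}\cup\{\pm\sqrt3\ (\textrm{doubled})\}$, confirming your stated spectrum $Sp(\mathbb{B})=\{\pm3\sqrt3,\pm\sqrt3\}$; your explicit observation that positivity forces $\ve\in\br$ (needed before applying the eigenvalue argument to $\id_8+\ve\mathbb{B}$) is a point the paper leaves implicit.
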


\section{Dynamics of $\Delta_\ve$}

Let $\D$ be a q.q.o. on $\bm_2(\bc)$. Let us consider the
corresponding quadratic operator defined by
$V_\D(\f)=\D^*(\f\o\f)$, $\f\in S(\bm_{2}(\bc))$. From Theorem
\ref{qc1} one can see that the defined operator $V_\D$ maps
$S(\bm_{2}(\bc))$ into itself if and only if $\||\bb\||\leq 1$ or
equivalently \eqref{D*1} holds. From \eqref{AA} we find that
\begin{eqnarray}\label{VD}
V_\Delta(\varphi)(\s_k)=\overset{3}{\underset{i,j=1}{\sum}}
b_{ij,k}f_if_j, \ \ \fb\in S.
\end{eqnarray}
Here as before $S=\{\fb=(f_1,f_2fp_3)\in\br^3: \
f_1^2+f_2^2+f_3^2\leq 1\}.$

So, \eqref{VD} suggests us to consider of the following nonlinear
operator $V:S\to S$ defined by
\begin{equation}\label{V}
V(\textbf{f})_{k}=\overset{3}{\underset{i,j=1}{\sum}}b_{ij,k}f_{i}f_{j},
  \ \ \  k=1,2,3.
\end{equation}
where $\fb=(f_1,f_2,f_3)\in S$.

It is worth to mention that uniqueness of the fixed point (i.e.
$(0,0,0)$) of the operator given by \eqref{V} was investigated in
\cite[Theorem 4.4]{MAA}.

In this section, we are going to study dynamics of the quadratic
operator $V_\ve$ corresponding to $\D_\ve$ (see \eqref{pp1}), which
has the following form
\begin{eqnarray}\label{Ve}
\left\{\begin{array}{c}
         V_\ve(f)_1=\ve(f_1^2+2f_2f_3) \\
         V_\ve(f)_2=\ve(f_2^2+2f_1f_3) \\
         V_\ve(f)_3=\ve(f_3^2+2f_1f_2)

       \end{array}
\right.
\end{eqnarray}

Let us first find some condition on $\ve$ which ensures \eqref{D*1}.

\begin{lem}\label{D22} Let $V_\ve$ be given by \eqref{Ve}. Then
$V_\ve$ maps $S$ into itself if and only if $|\ve|\leq\frac1
{\sqrt3}$ is satisfied.
\end{lem}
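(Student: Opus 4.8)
The plan is to reduce the set-inclusion $V_\ve(S)\subseteq S$ to a single scalar extremal problem. Since $S$ is the Euclidean unit ball, $V_\ve$ maps $S$ into itself precisely when $\|V_\ve(\fb)\|^2\le 1$ for every $\fb$ with $\|\fb\|\le 1$. Reading off \eqref{Ve}, one has $\|V_\ve(\fb)\|^2=\ve^2 Q(\fb)$, where
\[
Q(\fb)=(f_1^2+2f_2f_3)^2+(f_2^2+2f_1f_3)^2+(f_3^2+2f_1f_2)^2 .
\]
As $Q$ is a nonnegative form, homogeneous of degree $4$, its supremum over the ball $\|\fb\|\le 1$ is attained on the unit sphere $\|\fb\|=1$; thus it suffices to compute $M:=\max_{\|\fb\|=1}Q(\fb)$, after which the desired equivalence reads $\ve^2 M\le 1$.

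The key step is an algebraic simplification of $Q$ on the sphere. Expanding the squares and collecting terms in the elementary symmetric functions $p=f_1+f_2+f_3$, $q=f_1f_2+f_2f_3+f_3f_1$, $s=f_1f_2f_3$, I would use $\sum_i f_i^4=(\sum_i f_i^2)^2-2\sum_{i<j}f_i^2f_j^2$ together with $\sum_{i<j}f_i^2f_j^2=q^2-2ps$ to obtain, when $\sum_i f_i^2=1$,
\[
Q(\fb)=1+2\,(f_1f_2+f_2f_3+f_3f_1)^2 .
\]
The terms involving $ps=f_1f_2f_3(f_1+f_2+f_3)$ cancel identically, which is what makes this identity clean. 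This reduction is the main obstacle: the brute-force expansion is messy, and the whole argument hinges on recognizing that everything collapses to $1+2q^2$.

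It then remains to maximize $q=f_1f_2+f_2f_3+f_3f_1$ on the unit sphere. Using $2q=(f_1+f_2+f_3)^2-\sum_i f_i^2=p^2-1$ and the Cauchy--Schwarz bound $p^2\le 3\sum_i f_i^2=3$, I get $q\in[-\tfrac12,1]$, hence $q^2\le 1$, with equality at $\fb=\pm(1,1,1)/\sqrt3$. Therefore $M=1+2=3$, the maximum being genuinely attained. Consequently $\|V_\ve(\fb)\|^2\le 3\ve^2$ for all $\fb\in S$, with equality realized at $\fb=\pm(1,1,1)/\sqrt3$, so $V_\ve(S)\subseteq S$ holds if and only if $3\ve^2\le 1$, i.e. $|\ve|\le 1/\sqrt3$.
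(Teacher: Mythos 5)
Your proof is correct, and it takes a different route from the paper's. You compute the exact maximum of $\|V_\ve(\fb)\|^2$ on the ball: the identity $Q(\fb)=1+2(f_1f_2+f_2f_3+f_3f_1)^2$ on the unit sphere checks out (writing $p=f_1+f_2+f_3$, $q=\sum_{i<j}f_if_j$, $s=f_1f_2f_3$, the expansion gives $Q=\bigl(1-2(q^2-2ps)\bigr)+4ps+4(q^2-2ps)=1+2q^2$, so the $ps$-terms indeed cancel), and the range $q\in\bigl[-\tfrac12,1\bigr]$ from $2q=p^2-1$, $0\le p^2\le 3$, yields $M=3$ attained exactly at $\pm\tfrac1{\sqrt3}(1,1,1)$; since the maximum is attained, both implications of the equivalence follow from the single statement $\sup_{\fb\in S}\|V_\ve(\fb)\|^2=3\ve^2$. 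The paper instead splits the equivalence: for sufficiency it works with the bilinear form and notes that each component $\sum_{i,j}b_{ij,k}f_ip_j$ is an inner product of $\fb$ with a permutation of $\pb$, so componentwise Cauchy--Schwarz gives the bound $3\ve^2\|\fb\|^2\|\pb\|^2\le 3\ve^2$; for necessity it simply evaluates at the test vector $\fb=\pb=(1/\sqrt3,1/\sqrt3,1/\sqrt3)$. The trade-offs are worth noting. The paper's argument avoids your quartic expansion entirely and, more substantively, establishes the stronger two-variable condition \eqref{D*1} for all pairs $\fb,\pb\in S$, which is exactly what the remark following the lemma relies on (that \eqref{D*1} holds for $\ve\in(\tfrac13,\tfrac1{\sqrt3}]$ even though $\D_\ve$ is not positive there); your diagonal argument, as written, proves only the stated lemma and would need the polarized version of the Cauchy--Schwarz step to recover \eqref{D*1}. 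In exchange, your computation is sharp rather than an upper bound that happens to be tight, and it characterizes all extremizers, which dovetails nicely with Proposition \ref{fix}: the maximizers $\pm\tfrac1{\sqrt3}(1,1,1)$ are precisely the nontrivial fixed points of $V_\ve$ that appear at $|\ve|=1/\sqrt3$.
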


\begin{proof} "If" part. Assume that $V_\ve$ maps $S$ into itself.
Then \eqref{D*1} is satisfied. Take
$\fb=(1/\sqrt{3},1/\sqrt{3},1/\sqrt{3})$, $\pb=\fb$. Then from
\eqref{D*1} one finds
\begin{eqnarray*}
\sum_{k=1}^3 \bigg|\sum_{i,j=1}^3 b_{ij,k}f_i p_j\bigg|^2 =3\ve^2
\leq 1
\end{eqnarray*}
which yields $|\ve|\leq 1/\sqrt{3}$.

"only if" part. Assume that $|\ve|\leq 1/\sqrt{3}$. Take any
$\fb=(f_1,f_2,f_3),\pb=(p_1,p_2,p_3)\in S$. Then one finds
\begin{eqnarray*}
\sum_{k=1}^3 \bigg|\sum_{i,j=1}^3 b_{ij,k}f_i p_j\bigg|^2 &=&\ve^2
(|f_1p_1+f_3p_2+f_2p_3|^2+|f_3p_1+f_2p_2+f_1p_3|^2\\
&&+|f_2p_1+f_1p_2+f_3p_3|^2)\\[2mm]
&\leq&\ve^2((f_1^2+f_2^2+f_3^2)(p_1^2+p_2^2+p_3^2)+(f_3^2+f_2^2+f_1^2)(p_1^2+p_2^2+p_3^2)\\
&&+(p_1^2+p_2^2+p_3^2)(f_2^2+f_1^2+f_3^2))\\[2mm]
&\leq&\ve^2(1+1+1)=3\ve^2\leq 1.
\end{eqnarray*}
This completes the proof.
\end{proof}

\begin{rem} We stress that condition \eqref{D*1}
is necessary for $\D$ to be a positive operator. Namely, from
Theorem \ref{ex1} and Lemma \ref{D22} we conclude that if
$\ve\in(\frac13,\frac{1}{\sqrt{3}}]$ then the operator $\D_\ve$ is
not positive, while \eqref{D*1} is satisfied.\\
\end{rem}

In what follows, to study dynamics of $V_\ve$ we assume
$|\ve|\leq\frac{1}{\sqrt{3}}$.
 Recall that a vector $\fb\in S$ is a fixed point
of $V_\ve$ if $V_\ve(\fb)=\fb$. Clearly $(0,0,0)$ is a fixed point
of $V_\ve$. Let us find others. To do it, we need to solve the
following equation
\begin{eqnarray}\label{vfp}
\left\{\begin{array}{c}
         \ve(f_1^2+2f_2f_3)=f_1 \\
         \ve(f_2^2+2f_1f_3)=f_2 \\
         \ve(f_3^2+2f_1f_2)=f_3
       \end{array}
\right.
\end{eqnarray}

 We have the following
\begin{prop}\label{fix}
If $|\ve|<\frac1{\sqrt3}$ then $V_\ve$ has a unique fixed point
$(0,0,0)$ in $S$.  If $|\ve|=\frac1{\sqrt3}$ then $V_\ve$ has the
following fixed points $(0,0,0)$ and
$(\pm\frac1{\sqrt3},\pm\frac1{\sqrt3},\pm\frac1{\sqrt3})$ in $S$.
\end{prop}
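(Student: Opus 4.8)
The plan is to hang everything on a single norm estimate taken from the proof of Lemma \ref{D22}. Setting $\pb=\fb$ in that computation, the same chain of Cauchy--Schwarz bounds gives
\[
\|V_\ve(\fb)\|^2=\sum_{k=1}^3\Big|\sum_{i,j=1}^3 b_{ij,k}f_if_j\Big|^2\le 3\ve^2\|\fb\|^4,
\]
so $\|V_\ve(\fb)\|\le\sqrt3\,|\ve|\,\|\fb\|^2$ for every $\fb\in S$. If $\fb\neq 0$ is a fixed point, then $\|\fb\|=\|V_\ve(\fb)\|\le\sqrt3\,|\ve|\,\|\fb\|^2$, and dividing by $\|\fb\|$ yields $\|\fb\|\ge 1/(\sqrt3\,|\ve|)$. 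When $|\ve|<1/\sqrt3$ this forces $\|\fb\|>1$, contradicting $\fb\in S$; hence $(0,0,0)$ is the only fixed point, settling the first assertion.

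For the case $|\ve|=1/\sqrt3$ I would first verify by direct substitution into \eqref{vfp} that the diagonal points are fixed: for $\ve=1/\sqrt3$ the point $(1/\sqrt3,1/\sqrt3,1/\sqrt3)$ gives $\ve(f_k^2+2f_if_j)=\frac1{\sqrt3}(\frac13+\frac23)=\frac1{\sqrt3}$, and $(-1/\sqrt3,-1/\sqrt3,-1/\sqrt3)$ works likewise when $\ve=-1/\sqrt3$. To rule out others, note the estimate above now reads $\|\fb\|\ge 1$, so with $\fb\in S$ every nonzero fixed point satisfies $\|\fb\|=1$. Summing the three equations of \eqref{vfp} and writing $s=f_1+f_2+f_3$ produces the identity $s=\ve s^2$, whence $s=0$ or $s=1/\ve$.

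The two subcases are then disposed of separately. If $s=1/\ve$, then $|s|=\sqrt3=\sqrt3\,\|\fb\|$, which is exactly the equality case of $|f_1+f_2+f_3|\le\sqrt3\,\|\fb\|$; this forces $f_1=f_2=f_3$, and then $\|\fb\|=1$ together with the sign of $s$ (which matches that of $\ve$) pins down precisely the claimed diagonal point. The subcase $s=0$ is where the real work lies. Here I would subtract the equations of \eqref{vfp} in pairs and use $f_i+f_j=-f_k$ to get
\[
(f_1-f_2)(3\ve f_3+1)=0,\qquad (f_2-f_3)(3\ve f_1+1)=0,\qquad (f_3-f_1)(3\ve f_2+1)=0 .
\]
Since $\|\fb\|=1$, the three components cannot all coincide, so I run a short case analysis on how many of the $f_k$ are distinct: if exactly two coincide then the two surviving factors force $3\ve f_k=-1$, giving $|f_k|=1/\sqrt3$ and hence $\|\fb\|^2=2$, a contradiction; if all three are distinct then all three factors must vanish, forcing $f_1=f_2=f_3$, again absurd. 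Thus $s=0$ yields no nonzero fixed point.

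I expect the $s=0$ subcase to be the main obstacle, as it is the only place that needs genuine casework; everything else follows mechanically from the single estimate $\|V_\ve(\fb)\|\le\sqrt3\,|\ve|\,\|\fb\|^2$, the relation $s=\ve s^2$, and the equality analysis of one Cauchy--Schwarz inequality.
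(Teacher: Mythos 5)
Your proof is correct, and it takes a genuinely different route from the paper's. The paper first disposes of the case where some $f_k=0$, then divides through to pass to the ratios $x=f_1/f_2$, $y=f_1/f_3$, $z=f_2/f_3$, solves the resulting rational system \eqref{pr3} to get the four candidate ratio-triples $(1,1,1)$, $\big(1,-\frac12,-\frac12\big)$, $\big(-\frac12,1,-2\big)$, $(-2,-2,1)$, and then tests each against $\|\fb\|\leq 1$ and the well-definedness bound $|\ve|\leq\frac1{\sqrt3}$ from Lemma \ref{D22}; only the diagonal ratio survives, and only at $|\ve|=\frac1{\sqrt3}$. You instead exploit quadratic homogeneity: the single estimate $\|V_\ve(\fb)\|\leq\sqrt3\,|\ve|\,\|\fb\|^2$ settles the whole subcritical case $|\ve|<\frac1{\sqrt3}$ in one line and pins any nonzero fixed point at $|\ve|=\frac1{\sqrt3}$ to the unit sphere; then the trace identity $s=\ve s^2$ for $s=f_1+f_2+f_3$, the equality case of Cauchy--Schwarz when $s=1/\ve$, and the factorizations $(f_i-f_j)(1+3\ve f_k)=0$ when $s=0$ (via $f_i+f_j=-f_k$) complete the classification. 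I checked the key algebra: $V_\ve(\fb)_1-V_\ve(\fb)_2=\ve(f_1-f_2)(f_1+f_2-2f_3)$, which under $s=0$ becomes $-3\ve f_3(f_1-f_2)$, as you claim, and your two-subcase count is right (two coordinates coinciding forces $f_1=f_2=-\frac1{3\ve}$, $f_3=-2f_1$, hence $\|\fb\|^2=2$, absurd; all distinct forces all three bracket factors to vanish, hence $f_1=f_2=f_3$, absurd). What your approach buys: you never divide by the coordinates, so the paper's separate zero-coordinate case disappears; your casework is exhaustive, whereas the paper settles the remaining ratio cases with ``the same argument'' spelled out only for Case 2; and your analysis makes explicit that the sign of the diagonal fixed point matches the sign of $\ve$. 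What the paper's approach buys: the explicit candidate ratios, which are reusable for related operators; but as a proof of this proposition your argument is complete and arguably tighter.
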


\begin{proof} It is clear that $(0,0,0)$ is a fixed point of
$V_\ve$. If $f_k=0$, for some $k\in\{1,2,3\}$ then due to $|\ve|\leq
\frac1{\sqrt3}$, one can see that only solution of \eqref{vfp}
belonging to $S$ is $f_1=f_2=f_3=0$. Therefore, we assume that
$f_k\neq 0$ ($k=1,2,3$). So, from \eqref{vfp} one finds
  \begin{eqnarray}\label{pr1}
\left\{
    \begin{array}{c}
             \frac{{f_1}^2+2f_2f_3}{{f_2}^2+2f_1f_3}=\frac{f_1}{f_2} \\
             \frac{{f_1}^2+2f_2f_3}{{f_3}^2+2f_1f_2}=\frac{f_1}{f_3} \\
             \frac{{f_2}^2+2f_1f_3}{{f_3}^2+2f_1f_2}=\frac{f_2}{f_3}\\
           \end{array}
\right.
  \end{eqnarray}
  Denoting
  \begin{equation}\label{pr1a}
  x=\frac{f_1}{f_2}, \ y=\frac{f_1}{f_3}, \ z=\frac{f_2}{f_3}
  \end{equation}
  From \eqref{pr1} it follows that
  \begin{eqnarray}\label{pr2}
\left\{
  \begin{array}{ccc}
           x\bigg(\frac{x\big(1+\frac{2}{xy}\big)}{1+\frac{2x}{z}}-1\bigg)=0  \\
           y\bigg(\frac{y\big(1+\frac{2}{xy}\big)}{1+2yz}-1\bigg)=0 \\
           z\bigg(\frac{z\big(1+\frac{2x}{z}\big)}{1+2yz}-1\bigg)=0\\
         \end{array}
\right.
  \end{eqnarray}

According to our assumption $x,y,z$ are nonzero, so from \eqref{pr2}
one gets
 \begin{eqnarray}\label{pr3}
  \left\{
  \begin{array}{ccc}
  \frac{x\big(1+\frac{2}{xy}\big)}{1+\frac{2x}{z}}=1 \\
  \frac{y\big(1+\frac{2}{xy}\big)}{1+2yz}=1  \\
  \frac{z\big(1+\frac{2x}{z}\big)}{1+2yz}=1
  \end{array}
  \right.
  \end{eqnarray}
where $2x\neq-z$ and $2yz\neq-1$.

Dividing the second equality of \eqref{pr3} to the first one of
\eqref{pr3} we find
\begin{eqnarray*}
  \frac{y\big(1+\frac{2x}{z}\big)}{x(1+2yz)}=1
\end{eqnarray*}
which with $xz=y$ yields
\begin{eqnarray*}
  y+2x^2=x+2y^2.
\end{eqnarray*}
Simplifying the last equality one gets
\begin{eqnarray*}
  (y-x)(1-2(y+x))=0.
\end{eqnarray*}
This means that either $y=x$ or $x+y=\frac{1}{2}$.

Assume that $x=y$. Then from $xz=y$, one finds $z=1$. Moreover, from
the second equality of \eqref{pr3} we have $y+\frac{2}{y}=1+2y$. So,
$y^2+y-2=0$ therefore, the solutions of the last one are $y_1=1,
y_2=-2$. Hence, $x_1=1, x_2=-2$.

Now suppose that $x+y=\frac{1}{2}$, then $x=\frac{1}{2}-y$. We note
that $y\neq 1/2$, since $x\neq 0$. So, from the second equality of
\eqref{pr3} we find
$$y+\frac{4}{1-2y}=1+\frac{4y^2}{1-2y}.
$$
So, $2y^2-y-1=0$ which yields the solutions $y_3=-\frac{1}{2},
y_4=1$. Therefore,  we obtain $x_3=1$, $z_3=-\frac{1}{2}$ and
$x_4=-\frac{1}{2}$, $z_4=-2$.

Consequently, solutions of \eqref{pr3} are the following ones
$$(1,1,1), \
\big(1,-\frac{1}{2},-\frac{1}{2}\big),\
\big(-\frac{1}{2},1,-2\big),\  (-2,-2,1).$$

Now owing to \eqref{pr1a} and we need to solve the following
equations
\begin{eqnarray}\label{ffk}
  \left\{
  \begin{array}{l}
  \frac{f_1}{f_2}=x_k, \\
  \frac{f_2}{f_3}=z_k ,
  \end{array}
  \right. \ \ \ k=\overline{1,4}.
\end{eqnarray}
According to our assumption $f_k\neq 0$, therefore we consider cases
when $x_kz_k\neq 0$.

Now let us start to consider several cases:

{\sc case 1.} Let $x_2=1$, $z_2=1$. Then from \eqref{ffk} one gets
$f_1=f_2=f_3$. So, from \eqref{vfp} we find $3\varepsilon
{f_1}^2=f_1$, i.e. $f_1=\frac{1}{3\varepsilon}$. Now taking into
account ${f_1}^2+{f_2}^2+{f_3}^2\leq1$ one gets
$\frac{1}{3\varepsilon^2}\leq1$. From the last inequality we have
$|\varepsilon|\geq\frac{1}{\sqrt{3}}$. Due to Lemma \ref{D22} the
operator $V_\ve$ is well defined iff $|\ve|\leq\frac{1}{\sqrt{3}}$,
therefore, one gets $|\varepsilon|=\frac{1}{\sqrt{3}}$. Hence, in
this case a solution is
$\big(\pm\frac{1}{\sqrt{3}};\pm\frac{1}{\sqrt{3}};\pm\frac{1}{\sqrt{3}}\big)$.

{\sc case 2.} Let $x_2=1$, $z_2=-1/2$. Then from \eqref{ffk} one
finds $f_1=f_2, 2f_2=-f_3$. Substituting the last ones to
\eqref{vfp} we get $f_1+3{f_1}^2\varepsilon=0$. Then, we have
$f_1=-\frac{1}{3\varepsilon}, f_2=-\frac{1}{3\varepsilon},
f_3=\frac{2}{3\varepsilon}$. Taking into account
${f_1}^2+{f_2}^2+{f_3}^2\leq1$ we find
$\frac{1}{9\varepsilon^2}+\frac{4}{9\varepsilon^2}+\frac{1}{9\varepsilon^2}\leq1$.
This means $|\varepsilon|\geq\sqrt{\frac{2}{3}}$, due to Lemma
\ref{D22} in this case the operator $V_\ve$ is not well defined,
therefore, we conclude that there is not a fixed point of $V_\ve$
belonging to $S$.

Using the same argument for the rest cases we conclude the absence
of solutions. This shows that if $|\ve|<1/\sqrt{3}$ the operator
$V_\ve$ has unique fixed point in $S$. If $|\ve|=1/\sqrt{3}$, then
$V_\ve$ has three fixed points belonging to $S$. This completes the
proof.
\end{proof}

Now we are going to study dynamics of operator $V_\ve$.

\begin{thm} Let $V_\ve$ be given by \eqref{Ve}. Then the following assertions hold true:
\begin{enumerate}
\item[(i)] if $|\ve|<1/\sqrt{3}$, then for any $\fb\in S$ one has $V_{\ve}^n(\fb)\to (0,0,0)$ as
$n\to\infty$.
\item[(ii)] if $|\ve|=1/\sqrt{3}$, then for any $\fb\in S$ with
$\fb\notin\big\{(\pm\frac1{\sqrt3},\pm\frac1{\sqrt3},\pm\frac1{\sqrt3})\big\}$
one has $V_{\ve}^n(\fb)\to (0,0,0)$ as $n\to\infty$.
\end{enumerate}
\end{thm}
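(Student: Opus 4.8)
The plan is to prove a global convergence result for the iterates of $V_\ve$ by tracking a suitable scalar quantity that contracts under the dynamics. Since $V_\ve(\fb)=\ve(\fb^2+2\,\text{cross terms})$, the natural candidate is the Euclidean norm $\|\fb\|$, or equivalently the quantity $N(\fb)=f_1^2+f_2^2+f_3^2$. The key observation I would try to establish is that each coordinate of $V_\ve(\fb)$ is controlled by $\ve$ times a quadratic form in $\fb$, so that $\|V_\ve(\fb)\|\leq c(\ve)\|\fb\|^2$ for an explicit constant, and then combine this with the constraint $\|\fb\|\leq 1$ on the domain $S$ to extract genuine contraction.

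\textbf{Part (i): strict contraction when $|\ve|<1/\sqrt{3}$.} First I would compute $\|V_\ve(\fb)\|^2=\sum_{k=1}^3\big(\ve(f_k^2+2f_if_j)\big)^2$ (with $\{i,j,k\}$ a cyclic permutation of $\{1,2,3\}$) and bound it by a multiple of $\|\fb\|^4$. Indeed, following exactly the estimate already carried out in the proof of Lemma \ref{D22} with $\pb=\fb$, one gets
\begin{equation*}
\|V_\ve(\fb)\|^2=\ve^2\big(|f_1^2+2f_2f_3|^2+|f_2^2+2f_1f_3|^2+|f_3^2+2f_1f_2|^2\big)\leq 3\ve^2\|\fb\|^4.
\end{equation*}
Hence $\|V_\ve(\fb)\|\leq\sqrt{3}\,|\ve|\,\|\fb\|^2\leq\sqrt{3}\,|\ve|\,\|\fb\|$ for $\fb\in S$, since $\|\fb\|\leq 1$. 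Writing $q=\sqrt{3}\,|\ve|<1$, iteration gives $\|V_\ve^n(\fb)\|\leq q^n\|\fb\|\to 0$, which proves (i). In fact one even obtains the sharper quadratic decay $\|V_\ve^{n+1}(\fb)\|\leq\sqrt{3}\,|\ve|\,\|V_\ve^n(\fb)\|^2$, giving very fast (eventually doubly-exponential) convergence once the orbit enters a small ball.

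\textbf{Part (ii): the boundary case $|\ve|=1/\sqrt{3}$.} Here $q=1$ and the crude bound only gives $\|V_\ve(\fb)\|\leq\|\fb\|^2\leq\|\fb\|$, so the norm is nonincreasing but contraction is no longer automatic. The main obstacle is precisely this loss of strict contraction, together with the presence of the nonzero fixed points $(\pm\tfrac1{\sqrt3},\pm\tfrac1{\sqrt3},\pm\tfrac1{\sqrt3})$ found in Proposition \ref{fix}. My plan is to show that the inequality $\|V_\ve(\fb)\|\leq\|\fb\|^2$ is an \emph{equality on the boundary} only at these exceptional points: the Cauchy–Schwarz steps used in Lemma \ref{D22} are tight exactly when all coordinates are equal in modulus and aligned in phase, which for real $\fb$ forces $\fb=(\pm\tfrac1{\sqrt3},\pm\tfrac1{\sqrt3},\pm\tfrac1{\sqrt3})$. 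Thus for any $\fb\in S$ not among these points, either $\|\fb\|<1$ (so $\|V_\ve(\fb)\|\leq\|\fb\|^2<\|\fb\|$ already contracts), or $\|\fb\|=1$ but the Cauchy–Schwarz bound is strict, yielding $\|V_\ve(\fb)\|<1$ and pushing the orbit strictly inside the open ball after one step. Once $\|V_\ve^{n_0}(\fb)\|=:r<1$ for some $n_0$, the quadratic estimate gives $\|V_\ve^{n}(\fb)\|\leq r^{\,2^{\,n-n_0}}\to 0$.

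\textbf{Anticipated difficulty.} The delicate point in (ii) is ruling out the possibility that an orbit starting off the diagonal but on the unit sphere keeps returning to the sphere (i.e. that $V_\ve$ maps some non-exceptional boundary point back to another boundary point without losing norm). I would address this by a careful analysis of the equality case in the Cauchy–Schwarz estimate: one must verify that $\|V_\ve(\fb)\|=\|\fb\|^2$ with $\|\fb\|=1$ forces $|f_1|=|f_2|=|f_3|$ together with the sign/phase alignment that makes each $f_k^2+2f_if_j$ achieve its extremal value, and then check that the only such real points in $S$ are the four diagonal fixed points (and their sign variants), all of which are excluded by hypothesis. Granting this equality analysis, the norm strictly drops below $1$ after at most one step for every admissible initial point, and the fast quadratic decay then drives the orbit to the origin.
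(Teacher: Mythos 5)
Your proposal is correct, and your part (i) is essentially the paper's argument: the paper also tracks $\rho(\fb)=f_1^2+f_2^2+f_3^2$ and proves $\rho(V_\ve(\fb))\le 3\ve^2\rho(\fb)$ (via $|f_k^2+2f_if_j|\le\rho(\fb)\le 1$ and $2|f_if_j|\le f_i^2+f_j^2$ rather than Cauchy--Schwarz, an immaterial difference), then iterates the geometric bound. The genuine divergence is in the boundary case of (ii). Where you propose an equality-case analysis of Cauchy--Schwarz, the paper instead exploits an algebraic identity valid exactly on the unit sphere: if $f_1^2+f_2^2+f_3^2=1$, then
\begin{equation*}
V_\ve(\fb)_1=\ve\big(1-(f_2-f_3)^2\big),\qquad
V_\ve(\fb)_2=\ve\big(1-(f_1-f_3)^2\big),\qquad
V_\ve(\fb)_3=\ve\big(1-(f_1-f_2)^2\big),
\end{equation*}
so $|V_\ve(\fb)_k|\le|\ve|=1/\sqrt3$ for all $k$, with strict inequality in some coordinate as soon as $f_i\neq f_j$ for some pair; hence any non-exceptional boundary orbit enters the open ball after one step, where the interior estimate $\rho(V_\ve(\fb))\le d\,\rho(\fb)$ with $d=\rho(\fb)<1$ finishes the proof. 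Your route does close, but you should carry out, rather than ``grant,'' the equality analysis: simultaneous equality in the three coordinate estimates forces each of $(f_1,f_3,f_2)$, $(f_3,f_2,f_1)$, $(f_2,f_1,f_3)$ to be proportional to $(f_1,f_2,f_3)$ with factor $\pm 1$, and a short case check (the factor $-1$ forces a zero coordinate and then $\fb=0$) shows the only real unit solutions are $\pm\frac{1}{\sqrt3}(1,1,1)$ --- two points, not the ``four diagonal fixed points'' you mention, and for $\ve=1/\sqrt3$ only the $+$ point is actually fixed, the $-$ point being mapped onto it, so both are rightly excluded. Comparing the two: the paper's identity pinpoints the exceptional set in one line with no equality bookkeeping, while your Cauchy--Schwarz argument is more robust (it would survive perturbations of the coefficients $b_{ij,k}$ for which no such closed-form identity holds) and, via $\|V_\ve(\fb)\|\le\|\fb\|^2$, records the sharper doubly exponential decay $\|V_\ve^n(\fb)\|\le r^{2^{n-n_0}}$, which the paper's geometric bound $d^n\rho(\fb)$ does not.
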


\begin{proof}
Let us consider the following function
$\rho(\fb)=f_1^2+f_2^2+f_3^2$. Then we have
\begin{eqnarray*}
\rho(V_{\ve}(\fb))&=&\ve^2\big((f_1^2+2f_2f_3)^2+(f_2^2+2f_1f_3)^2+(f_3^2+2f_1f_2)^2\big)\\
&\leq&\ve^2\big(f_1^2+2|f_2||f_3|+f_2^2+2|f_1||f_3|+f_3^2+2|f_1||f_2|\big)\\
&\leq&\ve^2\big(f_1^2+f_2^2+f_3^2+f_2^2+f_1^2+f_3^2+f_3^2+f_1^2+f_2^2)\\
&=&3\ve^2(f_1^2+f_2^2+f_3^2)= 3\ve^2\rho(\fb)
\end{eqnarray*}

 This means
\begin{eqnarray}\label{ex7}
\rho(V_{\ve}(\fb))\leq3\ve^2\rho(\fb).
\end{eqnarray}

Due to $\ve^2\leq\frac13$ from \eqref{ex7} one finds
\begin{eqnarray*}
\rho(V_{\ve}^{n+1}(\fb))\leq\rho(V_{\ve}^n(\fb)),
\end{eqnarray*}
which yields that the sequence $\{\rho(V_{\ve}^n(\fb))\}$ is
convergent. Next we would like to find the limit of
$\{\rho(V_{\ve}^n(\fb))\}$.

(i). First we assume that $|\ve|<\frac{1}{\sqrt{3}}$, then from
\eqref{ex7} we obtain
\begin{eqnarray*}
\rho(V_{\ve}^n(\fb))\leq3\ve^2\rho(V_{\ve}^{n-1}(\fb))\leq\cdots\leq(3\ve^2)^n\rho(\fb).
\end{eqnarray*}

This yields that $\rho(V_{\ve}^n(\fb))\rightarrow0$ as
$n\rightarrow\infty$, for all $\fb\in S$.

(ii). Now let $|\ve|=\frac{1}{\sqrt{3}}$. Then consider two distinct
subcases.

{\sc Case (a)}. Let $f_1^2+f_2^2+f_3^2<1$ and denote
$d=f_1^2+f_2^2+f_3^2$. Then one gets
\begin{eqnarray*}
\rho(V_{\ve}(\fb))&\leq& \ve^2\big((f_1^2+2|f_2||f_3|)^2+(f_2^2+2|f_1||f_3|)^2+(f_3^2+2|f_1||f_2|)^2\big)\\
&\leq&\ve^2\big((f_1^2+f_2^2+f_3^2)^2+(f_2^2+f_1^2+f_3^2)^2+(f_3^2+f_1^2+f_2^2)^2\big)\\
&=&3\ve^2d^2=dd=d\rho(\fb).
\end{eqnarray*}
Hance,  we have $\rho(V_{\ve}(\fb))\leq d\rho(\fb)$. This means
$\rho(V_{\ve}^n(\fb))\leq d^n\rho(\fb)\rightarrow0 $. Hence,
$V_\ve^n(\fb)\to 0$ as $n\rightarrow\infty$.

{\sc Case (b).} Now take $f_1^2+f_2^2+f_3^2=1$ and assume that $\fb$
is not a fixed point. Therefore, we may assume that $f_i\neq f_j$
for some $i\neq j$, otherwise from Lemma \ref{fix} one concludes
that $\fb$ is a fixed point. Hence, from \eqref{Ve} one finds
\begin{eqnarray*}
V_\ve(\fb)_1=\ve(f_1^2+2f_2f_3)=\ve(1-f_2^2-f_3^2+2f_2f_3)=\ve(1-(f_2-f_3)^2).
\end{eqnarray*}
Similarly, one gets
\begin{eqnarray*}
&&V_\ve(\fb)_2=\ve(1-(f_1-f_3)^2),\\
&&V_\ve(\fb)_3=\ve(1-(f_1-f_2)^2).
\end{eqnarray*}
It is clear that $|V_\ve(\fb)_k|\leq |\ve|$ ($k=1,2,3$). According
to our assumption $f_i\neq f_j$ ($i\neq j$) we conclude that one
of $|V_\ve(\fb)_k|$ is strictly less than $\frac1{\sqrt3}$, this
means $V_\ve(\fb)_1^2+V_\ve(\fb)_2^2+V_\ve(\fb)_3^2<1$. Therefore,
from the case (a), one gets that $V_\ve^n(\fb)\to 0$ as
$n\rightarrow\infty$.

\end{proof}

\section*{Acknowledgement} The authors acknowledges the MOHE Grant FRGS11-022-0170. The first
named author thanks the Junior Associate scheme of the Abdus Salam
International Centre for Theoretical Physics, Trieste, Italy. The
authors would like to thank to an anonymous referee whose useful
suggestions and comments improve the content of the paper.

\end{document}